\newtheorem{Theorem}{Theorem}[section]
\newtheorem{Lemma}[Theorem]{Lemma}
\newtheorem{Definition}[Theorem]{Definition}
\newtheorem{Remark}[Theorem]{Remark}
\numberwithin{equation}{section}
\title{A Global Uniqueness for Formally Determined Inverse Electromagnetic Obstacle Scattering}
\author{Hongyu Liu\thanks{Department of Mathematics,
University of Washington, Box 354350, Seattle, WA 98195, USA ({\tt
hyliu@math.washington.edu}).}}
\begin{document}

\date{}

\maketitle

\begin{abstract}
It is proved that a general polyhedral perfect conducting obstacle
in $\mathbb{R}^3$, possibly consisting of finitely many solid
polyhedra, is uniquely determined by the far-field pattern
corresponding to a single incident wave. This improves earlier
results in the literature to the formally determined case.\\

\noindent\textbf{Mathematics Subject Classification
(2000).}~~Primary. 78A46,
35R30 Secondary. 35P25, 35Q60\\

\noindent\textbf{Keywords.} Inverse electromagnetic scattering,
identifiability and uniqueness, polyhedral perfect conducting obstacle.
\end{abstract}

\section{Introduction}

In this paper, we shall be mainly concerned with the inverse
electromagnetic obstacle scattering, where one utilizes the
time-harmonic electromagnetic far-field measurements to identify the
inaccessible unknown impenetrable objects.

For a brief description of the forward scattering problem, we let a
perfect conducting obstacle $\mathbf{D}\subset \mathbb{R}^3$ be a
compact set with connected Lipschitz complement
$\mathbf{G}:=\mathbb{R}^3\backslash \mathbf{D}$, and
\begin{align}
\mathbf{E}^i(x):=&\frac{\mathrm{i}}{k}\mbox{curl}\ \mbox{curl}\,p\,
e^{\mathrm{i}k {x}\cdot d}=\mathrm{i}k(d\times p)\times d
e^{\mathrm{i}k {x}\cdot d},\label{eq:electric}\\
\mathbf{H}^i(x):=& \mbox{curl}\,p\,e^{\mathrm{i}k {x}\cdot
d}=\mathrm{i}k d\times p e^{\mathrm{i}k {x}\cdot
d},\label{eq:magnetic}
\end{align}
be the incident electric and magnetic fields, where $p\in
\mathbb{R}^3$, $k>0$ and $d\in \mathbb{S}^2:=\{{x}\in\mathbb{R}^3;
|{x}|=1\}$ represents respectively the polarization, wave number and
direction of propagation. The incident wave propagating in the
homogeneous background medium will be perturbed when it encounters
an obstacle, and produces a scattered filed. We denote by
$\mathbf{E^s}$ and $\mathbf{H^s}$ the scattered electric and
magnetic fields respectively, and define the the total electric and
magnetic fields to be
\begin{equation}\label{eq:total field}
 \mathbf{E}({x})=\mathbf{E^i}({x})+\mathbf{E^s}({x}),\quad \mathbf{H}(x)=\mathbf{H}^i(x)+ \mathbf{H}^s(x)\qquad {x}\in\mathbb{R}^3.
\end{equation}
Then the direct scattering problem consists of finding a solution
$(\mathbf{E},\mathbf{H})\in H_{loc}^1(\mbox{curl}; \mathbf{G})$
$\times H_{loc}^1(\mbox{curl}; \mathbf{G})$ that satisfies the
following time-harmonic Maxwell equations
\begin{align}
&\qquad\mbox{curl}\,\mathbf{E}-\mathrm{i}\,k\,\mathbf{H}=0,\quad
\mbox{curl}\, \mathbf{H}+\mathrm{i}\,k\,\mathbf{E}=0\quad \mbox{in ~
$\mathbf{G}:=\mathbb{R}^3\backslash
\mathbf{D}$},\label{eq:Maxwell}\\
& \hspace*{2cm}~ \nu\times \mathbf{E}=0\quad \mbox{on ~$\partial
\mathbf{G}$},\label{eq:perfect bc}\\
&\qquad \qquad \lim_{|{x}|\rightarrow\infty}(\mathbf{H^s}\times {x}-
|{x}| \mathbf{E^s})=0,\label{eq:silver}
\end{align}
where the last limit corresponds to the so-called Silver-M\"uller
radiation condition characterizing the fact that the scattered wave
is radiating.

The well-posedness of the forward scattering problem
(\ref{eq:electric})-(\ref{eq:silver}) has been well understood (see
\cite{Ces}). Particularly, the cartesian components of $\mathbf{E}$
and $\mathbf{H}$ are (real) analytic in $\mathbf{G}$ and the
asymptotic behavior of the scattered field ($\mathbf{E^s}$,
$\mathbf{H^s}$) is governed by (see \cite{ColKre})

\begin{align}
\mathbf{E^s}({x};\mathbf{D},p,k,d)=&\frac{e^{\mathrm{i}k{x}\cdot
d}}{|{x}|}\bigg\{\mathbf{E}_\infty(\hat{{x}};\mathbf{D},p,k,d)+\mathcal{O}(\frac{1}{|{x}|})\bigg\}
\quad \mbox{as} ~~|{x}|\rightarrow\infty, \label{far1}\\
\mathbf{H^s}({x};\mathbf{D},p,k,d)=&\frac{e^{\mathrm{i}k{x}\cdot
d}}{|{x}|}\bigg\{\mathbf{H}_\infty(\hat{{x}};\mathbf{D},p,k,d)+\mathcal{O}(\frac{1}{|{x}|})\bigg\}
\quad \mbox{as} ~~|{x}|\rightarrow\infty,\label{far2}
\end{align}
uniformly for all $\hat{{x}}={x/|x|}\in \mathbb{S}^2$. The functions
$\mathbf{E}_\infty(\hat{{x}})$ and $\mathbf{H}_\infty(\hat{{x}})$ in
(\ref{far1}) and (\ref{far2}) are called, respectively, the electric
and magnetic \emph{far-field patterns}, and both are analytic on the
unit sphere ~$\mathbb{S}^2$. As is noted above,
$\mathbf{E^s}({x};\mathbf{D},p,k,d)$,
$\mathbf{E}_\infty(\hat{{x}};\mathbf{D},p,k,d)$, etc. will be
frequently used to specify their dependence on the observation
direction $\hat{x}$, the polarization $p$, the wave number $k$ and
the incident direction $d$.

Now, the inverse scattering problem is the following. Assume that
the obstacle $\mathbf{D}$ is unknown or inaccessible and we aim to
image the object and thereby identify it by performing far-field
measurements. That is, with the measurement of the electric
far-field pattern (or, equivalently, the magnetic far-field pattern)
of the wave which is scattered by $\mathbf{D}$ corresponding to a
given incident wave, for one or more choices of its polarization $p$
or of its wave number $k$, or of its propagation direction $d$, we
would like to recover the obstacle whose scattered waves are
compatible with the measurements performed. From the mathematical
viewpoint, the inverse obstacle scattering can be formulated as the
following operator equation
\begin{equation}\label{eq:IEOSP}
\mathcal{F}_e (\partial \mathbf{G})= \mathbf{E}_\infty (\hat{{{x}}};
\mathbf{D},p,k,d)\qquad\mbox{for $(\hat{{x}},p,k,d)\in
\mathbb{S}^2_0\times\mathbb{U}\times \mathbb{K}\times
\widetilde{\mathbb{S}}^2_0$},
\end{equation}
where $\mathbb{S}_0^2, \widetilde{\mathbb{S}}_0^2\subset
\mathbb{S}^2$, $\mathbb{U}\subset \mathbb{R}^3$ ,$\mathbb{K}\subset
\mathbb{R}_{+}:=\{x\in\mathbb{R}; x>0\}$ and the nonlinear operator
$\mathcal{F}_e$ is defined by the forward scattering system
(\ref{eq:Maxwell})-(\ref{eq:silver}). The inverse obstacle
scattering, having its roots in the technology of radar and sonar,
are also central to many other areas of science such as medical
imaging, geophysical exploration and nondestructive testing, etc..
We refer to \cite{ColKre} for a more detailed discussion and related
literature. As usual in most of the inverse problems, the first
question to ask in this context is the {\it identifiability}; i.e.,
whether an obstacle can really be identified from a knowledge of its
far-field pattern.  Mathematically, the \emph{identifiability} is
the \emph{uniqueness} issue, which is the injectivity of the
(nonlinear) operator $\mathcal{F}_e$ in (\ref{eq:IEOSP}). That is,
\begin{quote}
\emph{If two obstacles $\mathbf{D}$ and $\widetilde{\mathbf{D}}$
produce the same far field data, i.e.,
$$
\mathbf{E}_\infty(\hat{{x}};\mathbf{D},p,k,d)
=\mathbf{E}_\infty(\hat{{x}};\widetilde{\mathbf{D}},p,k,d) \quad
\mbox{for $(\hat{{x}},p,k,d)\in \mathbb{S}^2_0\times\mathbb{U}\times
\mathbb{K}\times \widetilde{\mathbb{S}}^2_0$},
$$
does $\mathbf{D}$ have to be the same as ~$\widetilde{\mathbf{D}}$ ?
}
\end{quote}
We refer to \cite{Isa} for a general discussion of the critical role
of uniqueness which plays in inverse problems theory theoretically
as well as numerically. It is observed that the uniqueness results
also provide the practical information on how many measurement data
one should use to identify the underlying object. As an important
ingredient in the uniqueness study and noting $\mathbf{E}_\infty$ is
an analytic function, one sees that if $\mathbb{S}_0^2$ in
(\ref{eq:IEOSP}) is an open subset of the unit sphere, no matter how
small the subset is, we can always recover such data on the whole
unit sphere by analytic continuation. Hence, for our uniqueness
study, without loss of generality, we can assume that the far-field
data are given on the whole unit sphere, i.e., in every possible
observation direction. Then it is easily seen that the inverse
obstacle scattering is formally determined with fixed $p_0\in
\mathbb{R}^3$, $k_0>0$ and $d_0\in \mathbb{S}^2$, since the far
field data depend on the same number of variables, as does the
obstacle which is to be recovered.\footnote{Here, the number of
variables is 2, since both $\partial \mathbf{G}$ and $\mathbb{S}^2$
are 2-manifold.} Due to such observation, there is a widespread
belief that one can establish the uniqueness by using the far field
pattern corresponding to a single incident wave. However, this has
remained to be a longstanding challenging open problem, though
extensive study has been made in this aspect (see \cite{CakCol2} and
\cite{ColKre3}). The only previous result that we are aware of this
kind is in \cite{Kre}, where it is shown that a simple ball can be
uniquely determined by its far-field measurement corresponding to a
single incident wave.

In the past few years, significant progress has been achieved on the
unique determination of general polyhedral type obstacles by several
far-field measurements. The breakthrough is first made in the
inverse acoustic obstacle scattering, where one utilizes the
acoustic far-field measurement to identify the underlying scattering
objects (see \cite{AleRon} \cite{CheYam} \cite{ElsYam2}
\cite{LiuZou} \cite{LiuZou4}). Among the arguments for the proofs of
those results, the new methodology developed in \cite{LiuZou} which
we call \emph{path argument} is proved to be particularly suitable
for attacking such problems. Based on suitably devised \emph{path
arguments}, together with some novel reflection principles for the
solutions of Maxwell equations, various uniqueness results have been
established in different settings with general polyhedral type
obstacles in \cite{LiuYamZou1} and \cite{LiuYamZou2}, but all with
the far-field measurements corresponding to two different incident
waves. In the current work, we are able to improve significantly on
this result to the formally determined setting. It is shown that the
measurement of the far-field pattern corresponding to a single
incident wave uniquely determines a general polyhedral perfect
conducting obstacle. For the proof, we follow the general strategy
in \cite{LiuYamZou1} and \cite{LiuYamZou2}, but several technical
new ingredients must be developed and the \emph{path argument} in
this work is refined significantly. We next state more precisely the
main result.

It is first recalled that a compact polyhedron in $\mathbb{R}^3$ is
a simply connected compact set whose boundary is composed of (open)
\emph{faces, edges} and \emph{vertices}. In the sequel, we call
$\mathbf{D}$ a \emph{polyhedral obstacle} if it is composed of
finitely many (but unknown \emph{a priori}) pairwise disjoint
compact polyhedra. That is,
\begin{equation}\label{eq:polyhedral obstacle}
\mathbf{D}=\bigcup_{l=1}^{m} D_l,
\end{equation}
where $m$ is an unknown integer but must be finite and each $D_l,
1\leq l\leq m$ is a compact polyhedron such that
\[
D_j\cap D_{j'}=\emptyset\quad \mbox{if $j\leq j'$ and $1\neq j,
j'\leq m$}.
\]
Clearly, the forward scattering problem
(\ref{eq:Maxwell})-(\ref{eq:silver}) with such a polyhedral obstacle
$\mathbf{D}$ is well-posed. Moreover, we know that the singular
behaviors of the weak solution only attach to the edges and
vertices, that is, $(\mathbf{E}, \mathbf{H})$ satisfies
(\ref{eq:Maxwell}) in the classical sense in any subdomain of
$\mathbf{G}$, which does not meet any corner or edge of $\mathbf{D}$
(see \cite{CosDau}). By the regularity of the strong solution for
the forward scattering problem, we know that both $\mathbf{E}$ and
$\mathbf{H}$ are at least $C^{0,\alpha}$-continuous ($0<\alpha<1$)
up to the regular points, namely, points lying in the interior of
the open faces of $\mathbf{D}$.

The main result of this paper is the following:

\begin{Theorem}\label{thm:single far-field}
Let $\mathbf{D}$ and $\widetilde{\mathbf{D}}$ be two perfect
polyhedral obstacles. For any fixed $k_0>0$, $d_0\in \mathbb{S}^2$
and $p_0\in \mathbb{R}^3$ such that $d_0$ and $p_0$ are linearly
independent, we have $\mathbf{D}=\mathbf{\widetilde{D}}$ as long as
\begin{equation}
\mathbf{E}_\infty({\hat{x}}; \mathbf{D}, p_0, k_0,
d_0)=\mathbf{E}_\infty({\hat{x}}; \mathbf{\widetilde{D}}, p_0, k_0,
d_0)\qquad \mbox{for ${\hat{x}}\in \mathbb{S}^2$}.
\end{equation}

\end{Theorem}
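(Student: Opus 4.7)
The plan is to argue by contradiction: assume $\mathbf{D}\neq \widetilde{\mathbf{D}}$ and derive a contradiction with the structure of the single incident plane wave. The first step is a standard one: since the electric far-field patterns coincide on $\mathbb{S}^2$, Rellich's lemma together with unique continuation for time-harmonic Maxwell's equations yields $\mathbf{E}^s\equiv \widetilde{\mathbf{E}}^s$ (and hence $\mathbf{E}\equiv \widetilde{\mathbf{E}}$) on the unbounded connected component $\mathbf{G}^*$ of $\mathbb{R}^3\setminus \overline{\mathbf{D}\cup\widetilde{\mathbf{D}}}$. If $\mathbf{D}\neq \widetilde{\mathbf{D}}$, then up to swapping the roles of the two obstacles, there is an open planar piece $\Pi$ of some face of $\widetilde{\mathbf{D}}$ that lies in $\overline{\mathbf{G}^*}\setminus \overline{\mathbf{D}}$. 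On such a $\Pi$, the original total field $\mathbf{E}$ satisfies the PEC condition $\nu\times \mathbf{E}=0$, even though $\Pi$ is not a part of $\partial \mathbf{D}$. This discrepancy is the geometric seed that the \emph{path argument} will exploit.

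The second step is to invoke the reflection principle for Maxwell's equations. If $\nu\times\mathbf{E}=0$ on an open planar piece $\Pi$, then $(\mathbf{E},\mathbf{H})$ extends as a Maxwell solution across $\Pi$ through the usual rule involving the reflection $\sigma_\Pi$ and a sign-flip of tangential/normal components, and the reflected field continues to agree with $\mathbf{E}^s+\mathbf{E}^i$ only up to an additional \emph{ghost} plane wave, namely the reflection of $\mathbf{E}^i$ across $\Pi$. Following the refined strategy of \cite{LiuYamZou1,LiuYamZou2}, I would construct a finite path of such reflections across successive faces of $\mathbf{D}$ and $\widetilde{\mathbf{D}}$, producing on the one hand an extended Maxwell solution defined on an increasingly large open set (in fact eventually on a neighborhood avoiding only finitely many singular edges/vertices), and on the other hand an accumulation of reflected plane-wave copies of $\mathbf{E}^i$. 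The refinement here is to choose the path so that, using the compactness and polyhedral finiteness of $\mathbf{D}\cup\widetilde{\mathbf{D}}$, the chain closes up or terminates at a corner, producing a bounded entire Maxwell solution equal to a non-trivial finite sum of reflected plane waves.

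The third step is to derive the contradiction from this finite sum. A non-trivial entire sum of plane-wave Maxwell solutions cannot be bounded on $\mathbb{R}^3$ (by a Liouville/Rellich-type argument for the radiating pieces, or equivalently by examining the distinct exponentials $e^{\mathrm{i}k\, x\cdot \sigma_{\Pi_j}\cdots\sigma_{\Pi_1}d_0}$ which are linearly independent as soon as their reflected directions are distinct). The linear independence assumption of $p_0$ and $d_0$ is used here to guarantee that $\mathbf{E}^i\not\equiv 0$ and, more importantly, that each reflected copy is a genuinely transverse plane wave that cannot cancel against the others. This forces a geometric identity on the selected faces (e.g.\ $\sigma_{\Pi_m}\cdots\sigma_{\Pi_1}=\mathrm{Id}$ acting on $d_0$ and a compatibility of the reflected polarizations) that is incompatible with the finiteness of a genuine polyhedral configuration, yielding $\mathbf{D}=\widetilde{\mathbf{D}}$.

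I expect the main obstacle to be the third step in the single-measurement regime: in the two-wave setting of \cite{LiuYamZou1,LiuYamZou2} one can subtract measurements to eliminate the incident field on selected faces, but here the chain of reflections must be engineered to force the contradiction directly from a \emph{single} triple $(p_0,k_0,d_0)$. Concretely, the hard technical point is to show that, for any two distinct polyhedral PEC obstacles, one can always construct a closed (or vertex-terminating) reflection path whose associated plane-wave sum is non-trivial — this is where a careful case analysis on corners/edges of $\mathbf{D}\cup\widetilde{\mathbf{D}}$, together with the linear-independence hypothesis on $(p_0,d_0)$, must replace the extra freedom provided by a second incident wave.
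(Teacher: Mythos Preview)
Your first step (Rellich plus unique continuation, producing an open planar piece in $\mathbf{G}=\mathbb{R}^3\setminus\mathbf{D}$ on which $\nu\times\mathbf{E}=0$) matches the paper exactly. After that, however, your strategy and the paper's diverge, and your version has a genuine gap.

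The paper does \emph{not} try to close up a reflection chain or manufacture a bounded entire Maxwell solution written as a finite sum of reflected copies of $\mathbf{E}^i$. Instead it runs a \emph{path argument to infinity}. One fixes a regular curve $\gamma$ from a point on a ``perfect plane'' $\widetilde{\Pi}_1$ out to infinity in $\mathbf{G}$, and uses the reflection principle (Theorem~\ref{thm:reflection}) to propagate perfect planes along $\gamma$: each bounded perfect plane $\widetilde{\Pi}_n$ generates a bounded symmetric polyhedral region $\mathbf{\Lambda}_{\widetilde{\Pi}_n}$ with $\nu\times\mathbf{E}=0$ on its boundary, $\gamma$ must exit $\mathbf{\Lambda}_{\widetilde{\Pi}_n}$ through a face, and that face yields the next perfect plane $\widetilde{\Pi}_{n+1}$, with a uniform arclength step $\geq r_0=\tfrac12\mathbf{d}(\gamma,\mathbf{D})$. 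Since every bounded perfect plane is trapped in $\overline{ch(\mathbf{D})}$, this gives the contradiction. The single-measurement input is used \emph{only} at one precise place: an unbounded perfect plane forces $\nu\parallel(d_0\times p_0)\times d_0$ (Lemma~\ref{lem:nparallel}), so all ``unbounded'' perfect planes are coplanar (Lemmas~\ref{lem:conplane}--\ref{lem:b conplane ub}); this lets one choose $\gamma$ to meet $\mathcal{Q}_{\mathbf{E}}$ at most once and ensures the induction never stalls.

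By contrast, your Steps~2--3 rest on two assertions that are not justified and are likely false in this generality. First, the reflection principle here reflects the \emph{total} field across a perfect plane inside a symmetric subdomain of $\mathbf{G}$; it does not produce an entire solution on $\mathbb{R}^3$, nor a decomposition of $\mathbf{E}$ as $\mathbf{E}^i$ plus finitely many reflected plane waves plus something radiating --- the scattered field is reflected too, and the domains $\mathbf{\Lambda}$ are bounded by pieces of $\partial\mathbf{D}$ and their mirror images, not by nothing. Second, there is no mechanism in three dimensions guaranteeing a \emph{closed} or vertex-terminating reflection path for an arbitrary pair of polyhedral obstacles; the paper explicitly avoids this by going to infinity rather than looping. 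Your anticipated ``hard technical point'' is exactly the missing idea: what actually replaces the second incident wave is the observation that unbounded perfect planes all share a single normal direction determined by $(p_0,d_0)$, not a linear-independence argument among reflected exponentials.
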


\begin{Remark}
As mentioned earlier, there are some uniqueness results established
in \cite{LiuYamZou1} and \cite{LiuYamZou2} in the unique
determination of general polyhedral obstacles, but with the
far-field data corresponding to two different incident waves.
However, the polyhedral obstacles considered in \cite{LiuYamZou1}
are more general than the present ones, and they admit the
simultaneous presence of crack-type components (namely, screens).
Whereas the uniqueness in \cite{LiuYamZou2} is established without
knowing the \emph{a priori} physical properties of the underlying
obstacle. In Section~4, we would make concluding remarks on that the
uniqueness result in Theorem~\ref{thm:single far-field} can not
cover completely the ones obtained in \cite{LiuYamZou1} and
\cite{LiuYamZou2}.
\end{Remark}

%\begin{Remark}\label{rem:thm1}
%In \cite{LiuYamZou1}, similar uniqueness result to
%Theorem~\ref{thm:single far-field} has been established but with two
%different far-field measurements. However, the polyhedral scatterers
%considered in \cite{LiuYamZou1} are more general than the present
%ones where they admit the simultaneous presence of crack-type
%components (namely, screens).
%\end{Remark}
%
%
%\begin{Remark}\label{rem:thm2}
%As shall be seen from our subsequent proof for
%Theorem~\ref{thm:single far-field}, it is not easy to extend our
%uniqueness result to the case without knowing the a priori physical
%properties of the scatterer $\mathbf{D}$; namely, $\mathbf{D}$ could
%be associated with the following mixed boundary condition
%\[
%\nu\times \mathbf{E}=0\quad \mbox{on \ $\Gamma_D$,}\qquad \nu\times
%\mathbf{H}=0\quad \mbox{on \ $\Gamma_N$},
%\]
%with $\Gamma_D$ and $\Gamma_N$ being a Lipschitz dissection for
%$\partial \mathbf{D}$ as that considered in \cite{LiuYamZou2}.
%\end{Remark}
%

The rest of the paper is organized as follows. In Section~2, we
introduce the perfect set and perfect planes, and then show several
crucial properties of them which shall play a key role in proving
Theorem~\ref{thm:single far-field}. Section~3 is devoted to the
proof of Theorem~\ref{thm:single far-field}, and in Section~4, we
give some concluding remarks.

\section{Perfect Set and Perfect Planes}\label{sect:key lemmas}

First, we fix some notations which shall be used throughout of the
rest of the paper. We denote an open ball in $\mathbb{R}^3$ with
center ${x}$ and radius $r$ by $B_r({x})$, the closure of $B_r({x})$
by $\bar{B}_r({x})$ and the boundary of $B_r({x})$ by $S_r({x})$.
The notation $T_r({x})$ is defined to be an open cube of edge length
$r$, centered at ${x}$, while $\bar{T}_r({x})$ is its corresponding
closure. Unless specified otherwise, $\nu$ shall always denote the
inward normal to a concerned domain, or the normal to an
two-dimensional plane in $\mathbb{R}^{3}$. The distance between two
sets $\mathscr{A}$ and $\mathscr{B}$ in $\mathbb{R}^3$ is understood
as usual to be $\mathbf{d}(\mathscr{A}, \mathscr{B})=\inf_{{x}\in
\mathscr{A}, {y}\in \mathscr{B}}|{x}-{y}|$. Finally, a curve
$\gamma=\gamma(t) (t\geq 0)$ is said to be regular if it is
$C^1$-smooth and $\frac{d}{dt}\gamma(t)\neq 0$.

Henceforth, we let $k_0>0$, $d_0\in \mathbb{S}^2$ and $p_0\in
\mathbb{R}^3$ be fixed such that $d_0$ and $p_0$ are linearly
independent, and denote by $\mathbf{E(x)}:=\mathbf{E}({x};
\mathbf{D}, p_0, k_0, d_0)$ the total electric field in
(\ref{eq:Maxwell})-(\ref{eq:silver}) corresponding to a polyhedral
perfect conducting obstacle $\mathbf{D}$ as described in
(\ref{eq:polyhedral obstacle}). The following definition of a
\emph{perfect set} is modified from that in \cite{LiuYamZou1} to fit
the problem being under investigation.

\begin{Definition}\label{def:perfect set}
$\mathscr{P}_\mathbf{E}$ is called a perfect set of $\mathbf{E}$ in
$\mathbf{G}:=\mathbb{R}^3\backslash \mathbf{D}$ if
\begin{equation*}
\mathscr{P}_{\mathbf{E}}=\left\{{x}\in \mathbf{G}; \nu\times
\mathbf{E} \mid_{\Pi\cap B_r({x})\cap \mathbf{G}}=0\,\, \mbox{for
some $r>0$ and plane $\Pi$ passing through ${x}$}\right\},
\end{equation*}
where $\nu$ is the unit normal to the plane $\Pi$.
\end{Definition}

For any ${x}\in \mathscr{P}_\mathbf{E}$, we let $\Pi$ be the plane
involved in the definition of $\mathscr{P}_{\mathbf{E}}$.
Furthermore, we let $\widetilde{\Pi}$ be the connected component of
$\Pi\backslash \mathbf{D}$ containing ${x}$, then by the analyticity
of $\mathbf{E}$ in $\mathbf{G}$, we see $\nu\times \mathbf{E}=0$ on
${\widetilde{\Pi}}$ by classical continuation. In the sequel, such
$\widetilde{\Pi}$ will be referred to as a \emph{perfect plane}. The
introduction of the prefect set and perfect plane is motivated by
the observation that, when proving Theorem~\ref{thm:single
far-field} by contradiction, if two different obstacles produce the
same far-field pattern, then outside one obstacle there exists a
perfect plane which is extended from an open face of the other
obstacle. Starting from now on, $\widetilde{\Pi}_l$ with an integer
$l$, shall always represent a perfect plane in $\mathbf{G}$ which
lies on the plane $\Pi_l$ in $\mathbb{R}^{3}$.

A very fine property of perfect planes is the so-called
\emph{reflection principle}, which constitutes an indispensable
ingredient in the path arguments for proving the uniqueness results
in \cite{LiuYamZou1} and \cite{LiuYamZou2}. We formulate the
principle in the following theorem. Subsequently, we use
$\mathscr{R}_\Pi$ to denote the reflection in $\mathbb{R}^3$ with
respect to a plane $\Pi$.

\begin{Theorem}\label{thm:reflection}
For a connected polyhedral domain $\Omega$ in
$\mathbf{G}:=\mathbb{R}^3\backslash \mathbf{D}$, let
$\widetilde{\Pi}$ be one of its faces that lies on some perfect
plane. Furthermore, let $\Pi$ be the plane in $\mathbb{R}^3$
containing $\widetilde{\Pi}$ and $\Omega\cup
\mathscr{R}_\Pi\Omega\subset \mathbf{G}$. We have two consequences:

\begin{enumerate}

\item[(i)] $\nu_\Pi\times \mathbf{E}=0$\,\, \mbox{on\, $\Pi\cap(\Omega\cup \mathscr{R}_\Pi
\Omega)$};

\item[(ii)] Suppose that $\Sigma\subset \partial\Omega$ is a subset
of one face of $\Omega$ other than $\widetilde{\Pi}$, and the
following condition holds
\begin{equation}\label{eq:thm perfect new1}
\nu_\Sigma\times \mathbf{E}=0\qquad \mbox{on $\Sigma$},
\end{equation}
where $\nu_\Sigma$ is the unit normal to $\Sigma$ directed to the
interior of $\Omega$. Then we have
\begin{equation}\label{eq:thm perfect new2}
\nu_{\Sigma'}\times \mathbf{E}=0\qquad \mbox{on $\Sigma'$},
\end{equation}
where $\Sigma'=\mathscr{R}_\Pi\Sigma$ and $\nu_{\Sigma'}$ is the
unit normal to $\Sigma'$ directed to the interior of
$\mathscr{R}_\Pi\Omega$.

\end{enumerate}

\end{Theorem}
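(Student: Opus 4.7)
The plan is to establish a single pointwise reflection identity for $\mathbf{E}$ across $\Pi$, from which both (i) and (ii) follow by short algebraic manipulations. Choose coordinates so that $\Pi=\{x_3=0\}$, $\Omega\subset\{x_3<0\}$ and $\mathscr{R}_\Pi\Omega\subset\{x_3>0\}$, and on $\mathscr{R}_\Pi\Omega$ introduce the candidate reflected fields
$$
\mathbf{E}^*(x)=(-E_1,-E_2,E_3)(\mathscr{R}_\Pi x),\qquad \mathbf{H}^*(x)=(H_1,H_2,-H_3)(\mathscr{R}_\Pi x).
$$
A direct chain-rule computation shows that $(\mathbf{E}^*,\mathbf{H}^*)$ solves the time-harmonic Maxwell system on $\mathscr{R}_\Pi\Omega$; this is the Maxwell analogue of the odd/even Schwarz reflection, dictated by the perfect-conductor boundary symmetry.

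The decisive step is to paste $(\mathbf{E},\mathbf{H})|_\Omega$ with $(\mathbf{E}^*,\mathbf{H}^*)$ across $\widetilde{\Pi}$. The tangential trace of $\mathbf{H}$ matches across $\widetilde{\Pi}$ automatically from the reflection formula, while the tangential trace of $\mathbf{E}$ matches because $\widetilde{\Pi}$ is a perfect plane, so both sides vanish identically. These are the natural transmission conditions for Maxwell, so the pasted pair is a weak—and hence, by interior elliptic regularity (each Cartesian component satisfies the vector Helmholtz equation), a strong and real-analytic—solution of the Maxwell system on the connected open set $\Omega\cup\widetilde{\Pi}\cup\mathscr{R}_\Pi\Omega\subset\mathbf{G}$. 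The original $\mathbf{E}$ is another analytic Maxwell solution on the same set, coinciding with the pasted one on the open subset $\Omega$; unique (equivalently, analytic) continuation then forces the two to agree throughout the doubled region, giving the pointwise identity
$$
\mathbf{E}(\mathscr{R}_\Pi x)=-R_\Pi\,\mathbf{E}(x)\qquad\text{for every } x\in\Omega\cup\widetilde{\Pi}\cup\mathscr{R}_\Pi\Omega,
$$
where $R_\Pi$ denotes the Euclidean reflection of vectors through $\Pi$.

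Both conclusions are now immediate. For (i), any $y\in\Pi\cap(\Omega\cup\mathscr{R}_\Pi\Omega)$ is fixed by $\mathscr{R}_\Pi$, so the identity gives $\mathbf{E}(y)=-R_\Pi\mathbf{E}(y)$, which kills the two $\Pi$-tangential components of $\mathbf{E}(y)$ and hence $\nu_\Pi\times\mathbf{E}=0$ on $\Pi\cap(\Omega\cup\mathscr{R}_\Pi\Omega)$. For (ii), the hypothesis says $\mathbf{E}(p)=\alpha\,\nu_\Sigma$ for each $p\in\Sigma$; since the inward normal to the reflected face satisfies $\nu_{\Sigma'}=R_\Pi\nu_\Sigma$, the reflection identity yields $\mathbf{E}(\mathscr{R}_\Pi p)=-R_\Pi(\alpha\,\nu_\Sigma)=-\alpha\,\nu_{\Sigma'}$, which is parallel to $\nu_{\Sigma'}$, so $\nu_{\Sigma'}\times\mathbf{E}=0$ on $\Sigma'$. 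I expect the principal obstacle to lie in the gluing step: one must check that no spurious surface current or surface charge is introduced across $\widetilde{\Pi}$—which is exactly what the perfect-plane hypothesis buys for the electric tangential trace, with the magnetic tangential match built into the reflection ansatz. Once the pasted field is recognized as a genuine analytic Maxwell solution, the analytic continuation is routine, and the standing hypothesis $\Omega\cup\mathscr{R}_\Pi\Omega\subset\mathbf{G}$ keeps one safely away from the singular set $\partial\mathbf{D}$.
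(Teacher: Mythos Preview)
Your argument is the standard Schwarz-type reflection for Maxwell fields and is almost certainly what the cited papers \cite{LiuYamZou1,LiuYamZou2} carry out; the present paper itself gives no self-contained proof but simply refers to those sources, so in that sense you are supplying what the paper omits rather than taking a different route.

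One genuine gap in your write-up, however, is the normalization ``$\Omega\subset\{x_3<0\}$, $\mathscr{R}_\Pi\Omega\subset\{x_3>0\}$''. This is not merely a convenience: the whole point of conclusion~(i) is that $\Pi\cap(\Omega\cup\mathscr{R}_\Pi\Omega)$ may be strictly larger than $\widetilde{\Pi}$, which happens precisely when $\Omega$ crosses $\Pi$. Under your normalization that set is empty and (i) is vacuous. Moreover, if $\Omega$ and $\mathscr{R}_\Pi\Omega$ overlap, your global ``pasting'' of $(\mathbf{E},\mathbf{H})|_\Omega$ with $(\mathbf{E}^*,\mathbf{H}^*)|_{\mathscr{R}_\Pi\Omega}$ is not even well defined on the overlap. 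The fix is to carry out the gluing \emph{locally}: pick a small ball $U\subset\mathbf{G}$ centered at a point of $\widetilde{\Pi}$, so that $U\cap\{x_3<0\}\subset\Omega$ and $U\cap\{x_3>0\}\subset\mathscr{R}_\Pi\Omega$; paste on $U$ exactly as you describe, match tangential traces across $\widetilde{\Pi}\cap U$, invoke interior regularity and unique continuation to conclude that the pasted field coincides with $\mathbf{E}$ on all of $U$, hence the identity $\mathbf{E}(\mathscr{R}_\Pi x)=-R_\Pi\mathbf{E}(x)$ holds on $U$. Now observe that both sides of this identity are real-analytic functions of $x$ on the connected set $\Omega$ (using $\mathscr{R}_\Pi\Omega\subset\mathbf{G}$), and they agree on the nonempty open subset $U\cap\Omega$; analytic continuation then propagates the identity to all of $\Omega$, and by symmetry to $\mathscr{R}_\Pi\Omega$. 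From that point your derivations of (i) and (ii) go through unchanged, with the additional remark for (ii) that the identity extends from $\Omega$ to the face $\Sigma\subset\partial\Omega$ by continuity of $\mathbf{E}$ up to regular boundary points.
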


\begin{proof}
The verification for (i) can be found in the proof of Theorem~3.2 in
\cite{LiuYamZou2}, while for (ii), is given in Theorem~2.3 in
\cite{LiuYamZou1}.
\end{proof}

The reflection principle in item (i) of Theorem~\ref{thm:reflection}
is particularly useful when $(\Pi\cap(\Omega\cup \mathscr{R}_\Pi
\Omega))\backslash \widetilde{\Pi}\neq\emptyset$. Clearly, in such
case, we can find a perfect plane also lying on the plane $\Pi$, but
different from $\widetilde{\Pi}$.

Next, we would classify all those perfect planes in $\mathbf{G}$
into two sets in $\mathbb{R}^3$, one is bounded and the other is
unbounded. In fact, it is verified directly that there might exist
unbounded perfect planes\footnote{This constitutes one of the major
differences from those perfect planes introduced in
\cite{LiuYamZou1} and \cite{LiuYamZou2}. All the perfect planes
defined there are bounded due to the use of two different incident
waves. See Lemma~3.2 in \cite{LiuYamZou1}.}. In our subsequent path
argument for proving Theorem~\ref{thm:single far-field}, the
procedure of continuation of perfect planes along an exit path might
be broken down with the presence of an unbounded perfect plane,
since one may not be able to find another perfect plane with an
unbounded perfect plane by using the reflection principle in
Theorem~\ref{thm:reflection}.
%
%Here we shall make some crucial observations on the perfect set and
%perfect planes. Since in our above definitions, we have only made
%use of a single incident wave, the critical property of boundedness
%for perfect set and perfect planes will be lost. This is in sharp
%contrast to that in \cite{LiuYamZou1} and \cite{LiuYamZou2}, where
%one has to utilize two different incoming waves to ensure the
%boundedness of the perfect set (and therefore the perfect planes).
%And it is the lack of boundedness of the perfect set and perfect
%planes that presents the most difficulties for our current study.
In the rest of this section, we shall show some critical properties
on the unbounded perfect planes.

\begin{Lemma}\label{lem:conplane}
All the unbounded perfect planes associated with $\mathbf{E}$ in
$\mathbf{G}$ are conplane.
\end{Lemma}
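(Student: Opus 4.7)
My plan is to prove Lemma~\ref{lem:conplane} in two stages: first show that every unbounded perfect plane has a fixed normal direction determined by the incident wave (hence all are parallel), and then rule out the existence of two distinct parallel unbounded perfect planes.

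For the first stage, let $\widetilde{\Pi}_j\subset\Pi_j$ be an unbounded perfect plane with unit normal $\nu_j$. Since $\mathbf{D}$ is compact, there is $r_0>0$ so that $\Pi_j\cap\{|x|>r_0\}\subset\widetilde{\Pi}_j$, and thus $\nu_j\times\mathbf{E}(x)=0$ on this unbounded portion. Set $\mathbf{c}:=(d_0\times p_0)\times d_0=p_0-(p_0\cdot d_0)d_0$; since $d_0$ and $p_0$ are linearly independent, $\mathbf{c}\neq 0$ and $\mathbf{E}^i(x)=ik\,\mathbf{c}\,e^{ikx\cdot d_0}$. By~(\ref{far1}) one has $|\mathbf{E}^s(x)|=O(1/|x|)$, hence
$$
0=\nu_j\times\mathbf{E}(x)=ik(\nu_j\times\mathbf{c})\,e^{ikx\cdot d_0}+O(1/|x|)\qquad (x\in\Pi_j,\ |x|\to\infty).
$$
Since $|e^{ikx\cdot d_0}|=1$, this forces $\nu_j\times\mathbf{c}=0$, so $\nu_j=\pm\mathbf{c}/|\mathbf{c}|$. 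Consequently every unbounded perfect plane is perpendicular to $\mathbf{c}$ and all such planes are mutually parallel.

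For the second stage, suppose toward a contradiction that there exist two distinct parallel unbounded perfect planes $\widetilde{\Pi}_1\subset\{x\cdot\nu=a_1\}$ and $\widetilde{\Pi}_2\subset\{x\cdot\nu=a_2\}$ with $\nu=\mathbf{c}/|\mathbf{c}|$ and $a_1\neq a_2$. Because $\nu\parallel\mathbf{c}$, we have $\nu\times\mathbf{E}^i\equiv 0$ everywhere, so the defining condition of the perfect planes reduces to $\nu\times\mathbf{E}^s=0$ on the unbounded portions of $\Pi_1$ and $\Pi_2$. I would then use the reflection principle of Theorem~\ref{thm:reflection}(i) iteratively: for $R_0$ large enough that $\mathbf{D}$ and all its iterated reflections $\mathscr{R}_{\Pi_1}^{n_1}\mathscr{R}_{\Pi_2}^{n_2}\mathbf{D}$ stay outside $\{|x|>R_0\}$, $\mathbf{E}^s$ extends by alternating reflection across $\Pi_1$ and $\Pi_2$ to a Maxwell solution on a region invariant under the translation $\mathscr{R}_{\Pi_2}\circ\mathscr{R}_{\Pi_1}$, which is the shift by $2(a_2-a_1)\nu$. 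The extension is therefore periodic in the $\nu$-direction, but the Silver-M\"uller condition~(\ref{eq:silver}) forces $\mathbf{E}^s(x)\to 0$ as $|x|\to\infty$; a nonzero periodic solution cannot decay, so $\mathbf{E}^s\equiv 0$ in the unbounded component of $\mathbf{G}$, and by analyticity $\mathbf{E}^s\equiv 0$ throughout $\mathbf{G}$. But then $\mathbf{E}=\mathbf{E}^i$ in $\mathbf{G}$, and the perfect-conductor condition $\nu_\Sigma\times\mathbf{E}=0$ on a face $\Sigma$ of $\mathbf{D}$ whose outward normal is not parallel to $\mathbf{c}$ is violated (such a face must exist because each compact polyhedron in $\mathbf{D}$ has faces with three non-coplanar normals). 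This contradiction proves that $\widetilde{\Pi}_1=\widetilde{\Pi}_2$, i.e., all unbounded perfect planes are coplanar.

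The main technical obstacle is the iterated-reflection step: one must verify that the reflected pieces of $\mathbf{E}^s$ paste together consistently into a single classical Maxwell solution on the punctured slab-periodic domain (matching across every intermediate plane follows from the reflection principle, but the global well-definedness and the fact that $R_0$ can be chosen to separate all iterates of $\mathbf{D}$ from the region of interest requires care), and then to turn the translational periodicity into a quantitative violation of $|\mathbf{E}^s(x)|=O(1/|x|)$. An equivalent route I would attempt if this pasting becomes awkward is to reflect $\mathbf{E}^s$ only once across $\Pi_1$, observe that the reflected field still satisfies $\nu\times\mathbf{E}^s=0$ on $\mathscr{R}_{\Pi_1}\Pi_2$ in addition to $\Pi_2$, and then extract the vanishing of $\mathbf{E}^s$ from a Rellich-type unique continuation in the slab between $\Pi_2$ and $\mathscr{R}_{\Pi_1}\Pi_2$.
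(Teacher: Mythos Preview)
Your first stage is correct and coincides with the paper's proof of Lemma~\ref{lem:nparallel}: the decay of $\mathbf{E}^s$ forces $\nu_j\times\mathbf{c}=0$, so every unbounded perfect plane has normal $\pm\mathbf{c}/|\mathbf{c}|$.

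Your second stage takes a genuinely different route from the paper. The paper does \emph{not} try to extend $\mathbf{E}^s$ to a periodic field. Instead, working in the polyhedral domain $\mathbb{R}^3\setminus\mathscr{T}$ for a large cube $\mathscr{T}\supset\mathbf{D}$, it applies Theorem~\ref{thm:reflection}(ii) repeatedly: reflecting $\widetilde{\Pi}_1$ across $\widetilde{\Pi}_2$ yields a new perfect plane $\widetilde{\Pi}_3$, then $\widetilde{\Pi}_4=\mathscr{R}_{\Pi_3}\widetilde{\Pi}_2$, and so on, producing an equi-spaced parallel family marching off in the $\nu$-direction. Eventually some $\widetilde{\Pi}_{l_0}$ lies entirely on one side of $\mathbf{D}$ and is therefore a full plane in $\mathbf{G}$. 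One more application of Theorem~\ref{thm:reflection}(ii) then gives $\nu\times\mathbf{E}=0$ on $\mathscr{R}_{\Pi_{l_0}}(\partial\mathbf{D})$; extending two adjacent faces of this reflected boundary produces two \emph{non-parallel} unbounded perfect planes, contradicting the first stage. The advantage of this route is that it never leaves the framework of Theorem~\ref{thm:reflection}: one propagates only the boundary condition, never the field itself, so no pasting or global well-definedness issues arise.

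Your periodicity argument, by contrast, rests on a field-level reflection formula (roughly $\mathbf{E}(\mathscr{R}_\Pi x)=-R_\nu\mathbf{E}(x)$) that is \emph{not} what Theorem~\ref{thm:reflection} asserts; that theorem only transports the condition $\nu\times\mathbf{E}=0$, not field values. Moreover, your description of the reflection domain is not right as written: the iterated reflections $\mathscr{R}_{\Pi_1}^{n_1}\mathscr{R}_{\Pi_2}^{n_2}\mathbf{D}$ are translates of $\mathbf{D}$ by integer multiples of $2(a_2-a_1)\nu$ and hence march to infinity in the $\nu$-direction, so they cannot ``stay outside $\{|x|>R_0\}$'' for any $R_0$; at best they stay inside a cylinder of the form $\{|x-(x\cdot\nu)\nu|\le R_0\}$. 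Even granting this, turning periodicity-plus-decay into $\mathbf{E}^s\equiv 0$ requires care about exactly where the periodicity relation is valid. None of this is fatal---your endpoint (if $\mathbf{E}^s\equiv 0$ then $\mathbf{E}^i$ violates the PEC condition on some face of $\mathbf{D}$) is correct---but the paper's argument sidesteps all of these issues by staying at the level of perfect planes and reducing the parallel case back to the non-parallel one.
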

Obviously, Lemma~\ref{lem:conplane} is divided into the following
two lemmata:

\begin{Lemma}\label{lem:nparallel}
There cannot exist two unbounded perfect planes $\widetilde{\Pi}_1$
and $\widetilde{\Pi}_2$ such that $\Pi_1\nparallel \Pi_2$.
\end{Lemma}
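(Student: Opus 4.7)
The plan is to exploit the sharp dichotomy between the non-decaying incident plane wave $\mathbf{E}^i$ and the decaying scattered field $\mathbf{E}^s$ on an unbounded perfect plane. Introduce the constant vector
$A:=\mathrm{i}k_0\,(d_0\times p_0)\times d_0$, so that (\ref{eq:electric}) reads $\mathbf{E}^i(x)=A\,e^{\mathrm{i}k_0\,x\cdot d_0}$. Because $d_0$ and $p_0$ are linearly independent, $d_0\times p_0\neq 0$, and $(d_0\times p_0)\times d_0=p_0-(d_0\cdot p_0)d_0$ is just the (nonzero) projection of $p_0$ onto the plane orthogonal to $d_0$; in particular $A\neq 0$. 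The key observation is that, for any unit vector $\nu\in\mathbb{S}^2$, the tangential amplitude $|\nu\times \mathbf{E}^i(x)|=|\nu\times A|$ is \emph{constant} in $x$.

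Now let $\widetilde{\Pi}$ be an arbitrary unbounded perfect plane in $\mathbf{G}$ with unit normal $\nu$. By Definition~\ref{def:perfect set} together with the analyticity of $\mathbf{E}$ in $\mathbf{G}$ (precisely the remark recorded right after that definition), we have $\nu\times \mathbf{E}=0$ on the whole of $\widetilde{\Pi}$. Since $\widetilde{\Pi}$ is unbounded, we may choose a sequence $\{x_n\}\subset\widetilde{\Pi}$ with $|x_n|\to\infty$. On this sequence
$\nu\times \mathbf{E}^i(x_n)=-\nu\times \mathbf{E}^s(x_n)$, and the right-hand side tends to $0$ thanks to the Silver--M\"uller asymptotic expansion (\ref{far1}), which furnishes $|\mathbf{E}^s(x)|=\mathcal{O}(1/|x|)$ as $|x|\to\infty$. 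Taking moduli, $|\nu\times A|=0$, i.e.\ $\nu$ is necessarily parallel to the fixed nonzero vector $A$.

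Applying this conclusion to both $\widetilde{\Pi}_1$ and $\widetilde{\Pi}_2$ with respective unit normals $\nu_1$ and $\nu_2$ forces $\nu_1\parallel A\parallel \nu_2$, which contradicts the hypothesis $\Pi_1\nparallel \Pi_2$. The only step requiring a moment of thought is the upgrade of $\nu\times \mathbf{E}=0$ from the small ball in Definition~\ref{def:perfect set} to the full connected piece $\widetilde{\Pi}$; but this is precisely the classical continuation argument already flagged immediately after that definition. It is worth emphasizing that, in contrast to the rest of the paper, this lemma needs neither the reflection principle of Theorem~\ref{thm:reflection} nor any path argument -- it is a direct consequence of the fact that a nontrivial plane wave cannot have asymptotically vanishing tangential component along a plane extending to infinity.
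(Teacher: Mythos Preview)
Your proof is correct and follows essentially the same approach as the paper's own argument: both exploit the decay $\mathbf{E}^s(x)=\mathcal{O}(1/|x|)$ to conclude that $\nu_l\times\bigl((d_0\times p_0)\times d_0\bigr)=0$ for $l=1,2$, forcing $\nu_1\parallel\nu_2$. Your write-up is in fact slightly more careful in spelling out why $A\neq 0$ and why $|\nu\times\mathbf{E}^i|$ is constant, but the underlying idea is identical.
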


\begin{Lemma}\label{lem:parallel}
There cannot exist two different unbounded perfect planes
$\widetilde{\Pi}_1$ and $\widetilde{\Pi}_2$ such that
$\Pi_1\parallel \Pi_2$.
\end{Lemma}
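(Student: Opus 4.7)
The plan is to derive a contradiction by using Theorem~\ref{thm:reflection} to manufacture an infinite family of parallel perfect planes that march off to infinity, and then exploiting the fact that far-off planes lie in $\mathbf{G}$ (being disjoint from the bounded obstacle $\mathbf{D}$) to run a Fourier/half-space uniqueness argument on the scattered field. Set up coordinates so that $\Pi_j=\{x:\nu\cdot x=c_j\}$ for a common unit normal $\nu$, with $h:=c_2-c_1>0$, and let $\Pi_n:=\{x:\nu\cdot x=c_1+(n-1)h\}$ for $n\in\mathbb{Z}$. Since $\mathbf{D}$ is bounded, $\widetilde{\Pi}_j\supset\Pi_j\setminus B_R(0)$ for $R$ large, and $\Pi_n\cap\mathbf{D}=\emptyset$ once $|n|$ is large. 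Choose a large rectangular box $\Omega$ with two opposite faces $F_1\subset\Pi_1$, $F_2\subset\Pi_2$ and four side faces parallel to $\nu$, centered far from $\mathbf{D}$ in a direction perpendicular to $\nu$, so that both $\Omega$ and $\mathscr{R}_{\Pi_2}\Omega$ sit in $\mathbf{G}$. Applying Theorem~\ref{thm:reflection}(ii) with $\widetilde{\Pi}=F_2$ and $\Sigma=F_1$ yields $\nu\times\mathbf{E}=0$ on $\Sigma'=\mathscr{R}_{\Pi_2}F_1\subset\Pi_3$. Varying the base point and the transverse size of the box sweeps out $\Pi_3\setminus B_{R'}(0)$, so $\Pi_3$ carries an unbounded perfect plane. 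Iterating the construction alternately across consecutive planes produces an unbounded perfect plane on every $\Pi_n$, $n\in\mathbb{Z}$.

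Write $A:=\mathrm{i}k_0[p_0-(d_0\cdot p_0)d_0]$, so $\mathbf{E}^i(x)=Ae^{\mathrm{i}k_0 d_0\cdot x}$ with $A\neq 0$ (by linear independence of $p_0,d_0$) and $A\perp d_0$. Picking any sequence $x_R\in\widetilde{\Pi}_1$ with $|x_R|\to\infty$, the condition $\nu\times\mathbf{E}=0$ on $\widetilde{\Pi}_1$ combined with $\mathbf{E}^s(x_R)=O(|x_R|^{-1})\to 0$ (Silver-M\"uller) yields $|\nu\times A|=|\nu\times\mathbf{E}^i(x_R)|\to 0$, hence $\nu\times A=0$. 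Combined with $A\perp d_0$, this forces $\nu\perp d_0$ and $A=\lambda\nu$ for some $\lambda\neq 0$.

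Choose Cartesian coordinates with $\nu=e_3$, $d_0=e_1$, $A=\lambda e_3$, so that $\mathbf{E}^i(x)=\lambda e_3 e^{\mathrm{i}k_0 x_1}$ and $E_1^i=E_2^i\equiv 0$. Pick $N$ so large that $\mathbf{D}\subset\{x_3<c_N\}$; then by the first paragraph, $\Pi_N$ is entirely a perfect plane, so $E_1^s=E_2^s=0$ on $\Pi_N$. In the half-space $H_+:=\{x_3>c_N\}\subset\mathbf{G}$, each of $E_1^s,E_2^s$ is a smooth Helmholtz solution satisfying the Sommerfeld radiation condition (inherited from Silver-M\"uller) and vanishing on $\partial H_+$; taking the partial Fourier transform in $(x_1,x_2)$ reduces the equation to $\partial_{x_3}^2\widehat{E}_j^s+(k_0^2-|\xi|^2)\widehat{E}_j^s=0$, and the radiation condition selects the single outgoing (or decaying) exponential in $x_3$, so the boundary condition $\widehat{E}_j^s(\xi,c_N)=0$ forces the amplitude to vanish. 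Hence $E_1^s\equiv E_2^s\equiv 0$ in $H_+$, and by analytic continuation through the connected set $\mathbf{G}$, throughout $\mathbf{G}$. The identity $\mathrm{div}\,\mathbf{E}^s=0$ then reduces to $\partial_{x_3}E_3^s=0$, so $E_3^s$ is constant along every vertical line contained in $\mathbf{G}$; each such line eventually enters $H_+$, where $|\mathbf{E}^s|=O(1/|x|)$ forces $E_3^s\equiv 0$. Therefore $\mathbf{E}\equiv\mathbf{E}^i=\lambda e_3 e^{\mathrm{i}k_0 x_1}$ in $\mathbf{G}$, and the perfect-conductor condition requires $\nu_{\partial\mathbf{D}}(x)\parallel e_3$ at every regular point of $\partial\mathbf{D}$, which is impossible for a compact polyhedral obstacle.

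The main obstacle is the third paragraph. The asymptotic argument of the second paragraph would already close the case $\nu\times A\neq 0$, but a single unbounded perfect plane is perfectly consistent with the degenerate configuration $A\parallel\nu$ (the incident plane wave polarized exactly normal to the plane). Only the existence of two distinct parallel perfect planes allows one, via iterated reflection, to push an unbounded perfect plane far enough away from $\mathbf{D}$ that a clean Fourier/half-space uniqueness argument is available to extinguish $\mathbf{E}^s$ completely and thereby collide with the polyhedral geometry of $\partial\mathbf{D}$.
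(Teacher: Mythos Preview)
Your argument is correct, and the first paragraph (iterated reflection to push an entire perfect plane past $\mathbf{D}$) matches the paper exactly. The divergence is in the endgame: once a full plane $\Pi_N$ with $\mathbf{D}$ on one side is in hand, the paper simply applies Theorem~\ref{thm:reflection}(ii) once more to conclude $\nu\times\mathbf{E}=0$ on $\mathscr{R}_{\Pi_N}(\partial\mathbf{D})$, and then observes that two adjacent faces of this reflected boundary yield two \emph{non-parallel} unbounded perfect planes, contradicting Lemma~\ref{lem:nparallel} directly. Your route instead passes through a half-space Dirichlet uniqueness for Helmholtz (to kill $E_1^s,E_2^s$) followed by the divergence constraint (to kill $E_3^s$), concluding $\mathbf{E}^s\equiv 0$ and colliding with the boundary condition on $\partial\mathbf{D}$. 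Both are valid; the paper's finish is shorter and stays entirely inside the reflection-principle toolbox already set up, whereas yours imports an external (though standard) PDE fact. Two small points of polish on your version: the Fourier-in-$(x_1,x_2)$ step is formal since $E_j^s\notin L^2$ on horizontal slices, so it is cleaner to cite half-space uniqueness directly (or argue by odd reflection and Rellich); and for $E_3^s$ you should conclude $E_3^s=0$ on $H_+$ first and then invoke analytic continuation through $\mathbf{G}$, since vertical rays from points beneath $\mathbf{D}$ need not stay in $\mathbf{G}$.
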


\begin{proof} [Proof of Lemma~\ref{lem:nparallel}]
Assume contrarily that $\widetilde{\Pi}_1$ and $\widetilde{\Pi}_2$
are two unbounded perfect planes in $\mathbf{G}$ such that
$\Pi_1\nparallel \Pi_2$. Let $\nu_1$ and $\nu_2$, respectively, be
the unit normals to $\Pi_1$ and $\Pi_2$. Noting that
$\mathbf{E}^s({x})=\mathcal{O}(1/|{x}|)$ as $|{x}|\rightarrow
\infty$, we have from
\[
\nu_l\times \mathbf{E}(x)=0\quad \mbox{on \ $\widetilde{\Pi}_l$\ for
\ $l=1,2,$}
\]
that
\[
\lim_{{x}\in \widetilde{\Pi}_l: |{x}|\rightarrow \infty}|\nu_l\times
\mathbf{E^i}({x})|=0\quad \mbox{for \ $l=1,2.$}
\]
Using (\ref{eq:electric}), we further deduce
\[
\nu_l\times((d_0\times p_0)\times d_0)=0\quad \mbox{for \ $l=1,2.$}
\]
That is, $\nu_1\parallel \nu_2$ since they are both parallel to a
fixed vector $(d_0\times p_0)\times d_0$, contradicting to our
assumption that $\Pi_1\nparallel \Pi_2$ and completing the proof.
\end{proof}
%
%\vspace*{4mm}
%
%In order to prove Lemma~\ref{lem:parallel} (and also
%Theorem~\ref{thm:single far-field}), we need the following
%reflection principle for Maxwell equations
%(\ref{eq:Maxwell})-(\ref{eq:silver}) associated with the polyhedral
%scatterer $\mathbf{D}$ (see \cite{LiuYamZou1} and
%\cite{LiuYamZou2}). In the following, for a plane $\Pi$ in
%$\mathbb{R}^3$, $R_\Pi$ shall always denote the reflection in
%$\mathbb{R}^3$ with respect to $\Pi$.
%
%\begin{Theorem}\label{thm:reflection}
%Let $\Omega\subset\mathbf{G}:=\mathbb{R}^3\backslash\mathbf{D}$ be a
%connected polyhedral domain and $\widetilde{\Pi}\subset\partial
%\Omega$ be one of its faces. Suppose that on $\widetilde{\Pi}$
%\begin{equation}\label{eq:thm perfect1}
%\nu_\Pi\times \mathbf{E}=0,
%\end{equation}
%where $\nu_\Pi$ is the unit normal to $\Pi$ with $\Pi$ being the
%plane in $\mathbb{R}^3$ containing $\widetilde{\Pi}$, and that
%$\Omega\cup R_\Pi \Omega\subset \mathbf{G}$, while
%$\Gamma\subset\partial \Omega$, being a subset of one face of
%$\Omega$ other than $\widetilde{\Pi}$, satisfies
%\begin{equation}\label{eq:thm perfect2}
%\nu_\Gamma\times \mathbf{E}=0 \quad \mbox{on \ \ $\Gamma$},
%\end{equation}
%where $\nu_\Gamma$ is the unit normal to $\Gamma$ directed to the
%interior of $\Omega$. Then, let $\Gamma'=R_\Pi \Gamma$ and we have
%\begin{equation}\label{eq:thm perfect}
%\nu_{\Gamma'}\times \mathbf{E}=0 \quad \mbox{on \ \ $\Gamma'$},
%\end{equation}
%where $\nu_{\Gamma'}$ is the unit normal to $\Gamma'$ directed to
%the interior of $R_\Pi \Omega$.
%\end{Theorem}
%
%
%\vspace*{3mm}

\begin{proof}[Proof of Lemma~\ref{lem:parallel}]
By contradiction, we assume that there exist two different perfect
planes $\widetilde{\Pi}_1$ and $\widetilde{\Pi}_2$ such that
$\Pi_1\parallel \Pi_2$. Let $\mathscr{T}:=T_r(0)$ be a sufficiently
large cube such that $\mathbf{D}\subset \mathscr{T}$, and by
suitable rotation, we may without loss of generality assume that
both $\Pi_1$ and $\Pi_2$ are perpendicular to one face of
$\mathscr{T}$. Next, with a little bit abuse of notations, we still
denote by $\widetilde{\Pi}_1$ and $\widetilde{\Pi}_2$ those parts of
$\widetilde{\Pi}_1$ and $\widetilde{\Pi}_2$ lying outside of
$\mathscr{T}$, namely, $\widetilde{\Pi}_1\backslash \mathscr{T}$ and
$\widetilde{\Pi}_2\backslash \mathscr{T}$, and the same rule applies
to $\widetilde{\Pi}_l, l\in \mathbb{Z}$ appearing in the rest of the
proof. Now, in the (unbounded) polyhedral domain
$\mathbb{R}^3\backslash \mathscr{T}$, we can make use of the
reflection reflection as stated in (ii) of
Theorem~\ref{thm:reflection}, and from $\widetilde{\Pi}_1$ and
$\widetilde{\Pi}_2$ to find that
\[
\nu\times \mathbf{E}=0\quad \mbox{on \ \
$\widetilde{\Pi}_3:=\mathscr{R}_{\Pi_2} (\widetilde{\Pi}_1)$}.
\]
Continuing with such argument, from $\widetilde{\Pi}_2$ and
$\widetilde{\Pi}_3$ we have
\[
\nu\times \mathbf{E}=0\quad \mbox{on \ \
$\widetilde{\Pi}_4:=\mathscr{R}_{\Pi_3} (\widetilde{\Pi}_2)$}.
\]
By repeating this reflection, we eventually find a sequence of
perfect planes $\widetilde{\Pi}_l,\ l=1,2,3,\ldots$ such that all
$\widetilde{\Pi}_l$'s are parallel to each other. Clearly,
$\mathbf{d}(\widetilde{\Pi}_l,
\widetilde{\Pi}_{l+1})=\mathbf{d}(\widetilde{\Pi}_1,\widetilde{\Pi}_2)>0$
being fixed for $l=1,2,3,\ldots$. Hence, there must exist some
$l_0<\infty$ such that $\mathscr{T}$ lies entirely at one side of
$\widetilde{\Pi}_{l_0}$. That is, $\widetilde{\Pi}_{l_0}=\Pi_{l_0}$
is the whole plane in $\mathbb{R}^3$. Obviously, $\mathbf{D}$ also
lies at one side of $\Pi_{l_0}$. Using again the reflection
principle in Theorem~\ref{thm:reflection}, (ii), we see
$\nu\times\mathbf{E}=0$ on $\mathscr{R}_{\Pi_{l_0}}(\partial
\mathbf{D})$. Finally, let $\Sigma_1$ and $\Sigma_2$ be two adjacent
faces of $\mathscr{R}_{\Pi_{l_0}}(\partial\mathbf{D})$ and we have
from the extension of $\Sigma_1$ and $\Sigma_2$ two non-parallel
unbounded perfect planes, which contradicts to
Lemma~\ref{lem:nparallel}. The proof is completed.
\end{proof}

We proceed to make an important observation of the reflection
principle (i) in Theorem~\ref{thm:reflection}, when $\Omega\cup
\mathscr{R}_\Pi \Omega$ is unbounded while $\widetilde{\Pi}$ is
bounded. In this case, it is clear that the extension of some part
of $(\Pi\cap(\Omega\cup \mathscr{R}_\Pi \Omega))\backslash
\widetilde{\Pi}$ gives at least one unbounded perfect plane. That
is, some bounded perfect plane might imply the existence of some
correspondingly unbounded perfect plane. Next, we study carefully
such special bounded perfect plane $\widetilde{\Pi}_0$, which can be
regarded as ``unbounded". To localize our investigation, we fix an
arbitrary point $x_0\in \widetilde{\Pi}_0\cap \mathbf{G}$ and take a
sufficiently small ball $\mathrm{B}_0:=B_r(x_0)$ such that
$\mathrm{B}_0\subset \mathbf{G}$.  $\mathrm{B}_0$ is divided by
$\widetilde{\Pi}_0$ into two half balls, which we respectively
denote by $\mathrm{B}_0^+$ and $\mathrm{B}_0^-$. Let
$\mathbf{G}_0^+$ be the connected component of $\mathbf{G}\backslash
\widetilde{\Pi}_0$ containing $\mathrm{B}_0^+$ and $\mathbf{G}_0^-$
be the connected component of $\mathbf{G}\backslash
\widetilde{\Pi}_0$ containing $\mathrm{B}_0^-$. We remark that it
may happen that $\mathbf{G}_0^+=\mathbf{G}_0^-$. Next, let
$\mathbf{\Lambda}_0^+$ be the connected component of
$\mathbf{G}_0^+\cap \mathscr{R}_{\Pi_0}(\mathbf{G}_0^-)$ containing
$\mathrm{B}_0^+$, and $\mathbf{\Lambda}_0^-$ be the connected
component of $\mathbf{G}_0^-\cap
\mathscr{R}_{\Pi_0}(\mathbf{G}_0^+)$ containing $\mathrm{B}_0^-$.
Finally, set $\mathbf{\Lambda}_0=\mathbf{\Lambda}_0^+\cup
\mathbf{\Lambda}_0^-$ and we see that $\mathbf{\Lambda}_0$ is a
polyhedral domain which symmetric with respect to $\Pi_0$, and
moreover, $\mathrm{B}_0\subset \mathbf{\Lambda}_0$. One can easily
see that the construction of $\mathbf{\Lambda}_0$ is only dependent
on the prefect plane $\widetilde{\Pi}_0$. Since $\partial
\mathbf{\Lambda}_0$ is composed of subsets lying either on $\partial
\mathbf{D}$ or on $\mathscr{R}_{\Pi_0}(\partial \mathbf{D})$, by the
reflection principle in (ii) of Theorem~\ref{thm:reflection}, we
have $\nu\times \mathbf{E}=0$ on $\partial \mathbf{\Lambda}_0$.

Starting from now on, we shall denote by
$\mathbf{\Lambda}_{\widetilde{\Pi}_l}$ the symmetric set constructed
as above corresponding to a bounded perfect plane
$\widetilde{\Pi}_l$; namely, in the above,
$\mathbf{\Lambda}_{\widetilde{\Pi}_0}:=\mathbf{\Lambda}_0$. Clearly,
in case $\mathbf{\Lambda}_{\widetilde{\Pi}_l}$ is unbounded, we see
from our earlier discussion that there must exist an unbounded
perfect plane which is extended from some part of $(\Pi_l\cap
\mathbf{\Lambda}_{\widetilde{\Pi}_l})\backslash \widetilde{\Pi}_l$.
%
% $\partial
%\mathbf{\Lambda}_{\widetilde{\Pi}_0}$, $\widetilde{\Pi}_0$,
%$\partial \mathbf{D}$ and $R_{\Pi_0}(\partial \mathbf{D})$ are all
%bounded, we know that if $\mathbf{\Lambda}_{\widetilde{\Pi}_0}$ is
%unbounded, it must contain the exterior of some sufficiently large
%ball $B_{r_0}(0)$; namely, $\mathbb{R}^3\backslash B_{r_0}(0)\subset
%\mathbf{\Lambda}_{\widetilde{\Pi}_0}$.
%
%
%\begin{Lemma}\label{lem:unbounded reflection}
%For a bounded perfect plane $\widetilde{\Pi}_0\subset \mathbf{G}$
%with unbounded $\mathbf{\Lambda}_{\widetilde{\Pi}_0}$, let
%$B_{r_0}(0)$ be sufficiently large ball such that
%$\mathbb{R}^3\backslash B_{r_0}(0)\subset
%\mathbf{\Lambda}_{\widetilde{\Pi}_0}$, then
%\[
%\nu\times \mathbf{E}=0\quad \mbox{on \ \ $\Pi_0\backslash
%B_{r_0}(0)$}.
%\]
%That is, $\Pi_0\backslash B_{r_0}(0)$ are parts of some unbounded
%perfect planes.
%\end{Lemma}
%
%\noindent \textbf{Proof.}~~See the proof of Theorem~3.2 in
%\cite{LiuYamZou2}. $\Box$
%
%
%\vskip 3mm
%
%Hence, in the sense of Lemma~\ref{lem:unbounded reflection}, the
%perfect plane $\widetilde{\Pi}$ with unbounded
%$\mathbf{\Lambda}_{\widetilde{\Pi}}$ can essentially be taken as an
%unbounded perfect plane.
Such observation in combination with the result in
Lemma~\ref{lem:conplane} gives

\begin{Lemma}\label{lem:b conplane ub}
All the bounded perfect planes $\widetilde{\Pi}_l$ with unbounded
$\mathbf{\Lambda}_{\widetilde{\Pi}_l}$ and all the unbounded perfect
planes are conplane.
\end{Lemma}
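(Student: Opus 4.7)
The plan is to combine Lemma~\ref{lem:conplane} with the observation (stated in the paragraph just above the lemma) that each bounded perfect plane $\widetilde{\Pi}_l$ with unbounded $\mathbf{\Lambda}_{\widetilde{\Pi}_l}$ spawns an unbounded perfect plane lying on the very same plane $\Pi_l$. That single reduction does essentially all the work.

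More precisely, first I would invoke Lemma~\ref{lem:conplane} to fix a plane $\Pi^{*}\subset\mathbb{R}^{3}$ on which every unbounded perfect plane lies. Next, I would take an arbitrary bounded perfect plane $\widetilde{\Pi}_{l}$ satisfying $\mathbf{\Lambda}_{\widetilde{\Pi}_{l}}$ unbounded, and apply the construction preceding the lemma: by the reflection principle Theorem~\ref{thm:reflection}(ii), $\nu\times\mathbf{E}=0$ on $\partial\mathbf{\Lambda}_{\widetilde{\Pi}_{l}}$, and Theorem~\ref{thm:reflection}(i) shows that $\nu_{\Pi_{l}}\times\mathbf{E}=0$ on $\Pi_{l}\cap\mathbf{\Lambda}_{\widetilde{\Pi}_{l}}$. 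Since $\mathbf{\Lambda}_{\widetilde{\Pi}_{l}}$ is unbounded while $\widetilde{\Pi}_{l}$ itself is bounded, the set $(\Pi_{l}\cap\mathbf{\Lambda}_{\widetilde{\Pi}_{l}})\setminus\widetilde{\Pi}_{l}$ contains an unbounded planar piece; extending it by analytic continuation inside $\mathbf{G}$ produces an unbounded perfect plane $\widetilde{\Pi}_{l}^{\,\infty}$ contained in the plane $\Pi_{l}$.

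Then by Lemma~\ref{lem:conplane}, $\widetilde{\Pi}_{l}^{\,\infty}\subset\Pi^{*}$. Because $\widetilde{\Pi}_{l}^{\,\infty}$ is a two-dimensional subset of $\mathbb{R}^{3}$ lying in both $\Pi_{l}$ and $\Pi^{*}$, the two planes must coincide: $\Pi_{l}=\Pi^{*}$. Hence $\widetilde{\Pi}_{l}\subset\Pi^{*}$ as well. Since the choice of $\widetilde{\Pi}_{l}$ was arbitrary among bounded perfect planes with unbounded symmetric hull, the conclusion follows.

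The argument is essentially a bookkeeping step on top of the two preceding lemmas, so there is no serious obstacle. The one point that needs a little care is verifying that the ``extension'' of $(\Pi_{l}\cap\mathbf{\Lambda}_{\widetilde{\Pi}_{l}})\setminus\widetilde{\Pi}_{l}$ genuinely yields an \emph{unbounded} perfect plane in the sense of Definition~\ref{def:perfect set} (not merely a perfect set portion possibly disconnected by $\mathbf{D}$); this is handled exactly as in the remark that ``the extension of some part of $(\Pi\cap(\Omega\cup\mathscr{R}_{\Pi}\Omega))\setminus\widetilde{\Pi}$ gives at least one unbounded perfect plane,'' together with the fact that a connected component of $\Pi_{l}\setminus\mathbf{D}$ meeting an unbounded portion of $\mathbf{\Lambda}_{\widetilde{\Pi}_{l}}$ must itself be unbounded because $\mathbf{D}$ is bounded.
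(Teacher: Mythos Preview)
Your proposal is correct and follows exactly the route the paper takes: the paper simply states that the lemma follows by combining Lemma~\ref{lem:conplane} with the observation (in the paragraph immediately preceding) that a bounded perfect plane $\widetilde{\Pi}_l$ with unbounded $\mathbf{\Lambda}_{\widetilde{\Pi}_l}$ generates an unbounded perfect plane lying on $\Pi_l$. You have merely spelled out the details of this one-line reduction, including the minor care point about the extension being genuinely unbounded, which the paper leaves implicit.
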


Based on Lemma~\ref{lem:b conplane ub}, we introduce the following
set consisting of all the ``unbounded" perfect planes
\begin{align}
&\mathcal{Q}_{\mathbf{E}}:=\{\widetilde{\Pi};\ \widetilde{\Pi}\
\mbox{is an unbounded
perfect plane}\nonumber\\
&\hspace*{2cm}\mbox{or $\widetilde{\Pi}$ is a bounded perfect plane
but with unbounded $\mathbf{\Lambda}_{\widetilde{\Pi}}$}
\}.\label{eq:unbouded perfect planes}
\end{align}
Since all the members in $\mathcal{Q}_\mathbf{E}$ are conplane, one
verifies directly that $\mathcal{Q}_\mathbf{E}$ consists of at most
finitely many perfect planes by noting the fact that $\mathbf{D}$ is
composed of finitely many pairwise disjoint compact polyhedra. We
further define $\breve{\mathcal{Q}}_{\mathbf{E}}$ to be the subset
of $\mathcal{Q}_{\mathbf{E}}$ consisting of those bounded perfect
planes in $\mathcal{Q}_{\mathbf{E}}$. Next, we show some topological
properties of the sets $\mathcal{Q}_\mathbf{E}$ and
$\breve{\mathcal{Q}}_{\mathbf{E}}$.

\begin{Lemma}\label{lem:topology}
Let $\mathbf{G}:=\mathbb{R}^3\backslash \mathbf{D}$, then
\begin{enumerate}
\item[(i)]~$\mathbf{G}\backslash \overline{\breve{\mathcal{Q}}}_{\mathbf{E}}$ is
connected;

\item[(ii)]~$\mathbf{G}\backslash \overline{\mathcal{Q}}_{\mathbf{E}}$ has no
bounded connected component.
\end{enumerate}
\end{Lemma}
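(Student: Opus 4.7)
The plan hinges on a single observation from Lemma~\ref{lem:b conplane ub}: every perfect plane in $\mathcal{Q}_{\mathbf{E}}$ lies in a common plane, which I denote $\Pi^{\ast}$. Consequently $\overline{\mathcal{Q}}_{\mathbf{E}}$ is a closed (possibly unbounded) planar subset of $\Pi^{\ast}$, while $\overline{\breve{\mathcal{Q}}}_{\mathbf{E}}$ is a compact planar subset of $\Pi^{\ast}$ (using that $\mathcal{Q}_{\mathbf{E}}$ has only finitely many members). Both parts will be reduced to three-dimensional topology of the complement of a planar set.

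For (i), I would argue directly. Pick $x,y\in \mathbf{G}\setminus \overline{\breve{\mathcal{Q}}}_{\mathbf{E}}$. Connectedness of $\mathbf{G}$ yields a path $\gamma\subset \mathbf{G}$ joining them, which after a small perturbation inside $\mathbf{G}$ may be assumed transverse to $\Pi^{\ast}$, so that $\gamma\cap\overline{\breve{\mathcal{Q}}}_{\mathbf{E}}$ consists of finitely many interior points of $\gamma$. At each crossing point $p$, openness of $\mathbf{G}$ supplies a ball $B_{r}(p)\subset \mathbf{G}$ inside which $\overline{\breve{\mathcal{Q}}}_{\mathbf{E}}$ appears as a planar disk, and I can reroute $\gamma$ locally inside $B_{r}(p)$ around this disk by a small displacement in the direction normal to $\Pi^{\ast}$. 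Iterating this detour over all crossings produces a path from $x$ to $y$ in $\mathbf{G}\setminus\overline{\breve{\mathcal{Q}}}_{\mathbf{E}}$.

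For (ii), the strategy is proof by contradiction: assume $C$ is a bounded connected component of $\mathbf{G}\setminus\overline{\mathcal{Q}}_{\mathbf{E}}$. The key claim is that $\partial C$ contains an entire perfect plane $\widetilde{\Pi}_{i}\in\mathcal{Q}_{\mathbf{E}}$. Indeed, $\partial C\subset \partial\mathbf{D}\cup\overline{\mathcal{Q}}_{\mathbf{E}}\subset \partial\mathbf{D}\cup\Pi^{\ast}$; $\partial C$ cannot be entirely contained in $\partial\mathbf{D}$, else $C$ would be a bounded component of $\mathbf{G}$, contradicting the connectedness of $\mathbf{G}$; moreover a purely lower-dimensional intersection with $\Pi^{\ast}$ cannot separate a three-dimensional region, so some open two-dimensional piece of $\partial C$ must sit inside $\mathcal{Q}_{\mathbf{E}}$. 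Because each perfect plane is an open connected component of $\Pi^{\ast}\setminus\mathbf{D}$ and $C$ is itself a component of an open set, a standard openness-and-connectedness argument upgrades this piece to an entire perfect plane $\widetilde{\Pi}_{i}\subset\partial C$. Compactness of $\partial C$ forces $\widetilde{\Pi}_{i}$ to be bounded, so $\widetilde{\Pi}_{i}\in\breve{\mathcal{Q}}_{\mathbf{E}}$ and hence $\mathbf{\Lambda}_{\widetilde{\Pi}_{i}}$ is unbounded. To close the argument, let $\mathrm{B}_{0}\subset\mathbf{G}$ be a small ball centered at some $x_{0}\in\widetilde{\Pi}_{i}$; since $\widetilde{\Pi}_{i}\subset\partial C$, one of the two half-balls, say $\mathrm{B}_{0}^{+}$, lies in $C$. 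The set $\mathbf{\Lambda}^{+}_{\widetilde{\Pi}_{i}}$ is a connected subset of $\mathbf{G}$ lying on the $+$ side of $\Pi^{\ast}$ (hence disjoint from $\overline{\mathcal{Q}}_{\mathbf{E}}\subset\Pi^{\ast}$) and containing $\mathrm{B}_{0}^{+}$, so $\mathbf{\Lambda}^{+}_{\widetilde{\Pi}_{i}}\subset \mathbf{G}\setminus\overline{\mathcal{Q}}_{\mathbf{E}}$ is connected and meets $C$; thus $\mathbf{\Lambda}^{+}_{\widetilde{\Pi}_{i}}\subset C$, which is unbounded by the symmetry of $\mathbf{\Lambda}_{\widetilde{\Pi}_{i}}$ across $\Pi^{\ast}$ --- contradicting boundedness of $C$. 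The hardest step I anticipate is the topological one: showing that $\partial C$ must contain a full two-dimensional perfect plane rather than touching $\overline{\mathcal{Q}}_{\mathbf{E}}$ only along a lower-dimensional set, and cleanly reducing the planar analysis to complete components of $\Pi^{\ast}\setminus\mathbf{D}$.
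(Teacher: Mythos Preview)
Your argument for (i) has a genuine gap. At a transverse crossing point $p$ lying in the interior (relative to $\Pi^{\ast}$) of a perfect plane $\widetilde\Pi\in\breve{\mathcal Q}_{\mathbf E}$, one has $\Pi^{\ast}\cap B_r(p)\subset\widetilde\Pi\subset\overline{\breve{\mathcal Q}}_{\mathbf E}$ for all small $r$, so the \emph{entire} equatorial disk of $B_r(p)$ belongs to the set you wish to avoid. Any path from the upper half-ball to the lower half-ball inside $B_r(p)$ must cross that disk; no local detour exists, and a displacement normal to $\Pi^{\ast}$ does nothing to remove a transverse crossing. The broader heuristic ``a compact planar set cannot disconnect a connected open subset of $\mathbb R^3$'' is valid only when the planar set lies compactly inside the open set; here each bounded perfect plane is a full connected component of $\Pi^{\ast}\setminus\mathbf D$, its relative boundary sits on $\partial\mathbf D$, and hence $\overline{\breve{\mathcal Q}}_{\mathbf E}\not\subset\mathbf G$. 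What actually prevents disconnection is the defining property of $\breve{\mathcal Q}_{\mathbf E}$, namely that $\mathbf\Lambda_{\widetilde\Pi}$ is unbounded, and your path argument never invokes it.

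Your approach to (ii), on the other hand, is essentially the paper's own argument---which the paper in fact uses to prove (i). One assumes a bounded component, extracts from its boundary a full perfect plane $\widetilde\Pi_0\in\breve{\mathcal Q}_{\mathbf E}$, and derives a contradiction from the unboundedness of $\mathbf\Lambda_{\widetilde\Pi_0}$. The paper phrases the last step as the containment $\mathbf\Lambda_{\widetilde\Pi_0}\subset\mathcal C_0\cup\mathscr R_{\Pi_0}\mathcal C_0$, which forces $\mathbf\Lambda_{\widetilde\Pi_0}$ to be bounded; this sidesteps your assertion that $\mathbf\Lambda^{+}_{\widetilde\Pi_i}$ lies on one side of $\Pi^{\ast}$, which is not immediate because $\mathbf G^{+}$ is merely a component of $\mathbf G\setminus\widetilde\Pi_i$ and may meet both half-spaces. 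Having proved (i) in this way, the paper obtains (ii) almost for free: a bounded component of $\mathbf G\setminus\overline{\mathcal Q}_{\mathbf E}$ would already be a bounded component of $\mathbf G\setminus\overline{\breve{\mathcal Q}}_{\mathbf E}$, contradicting (i). So the fix is simply to transplant your (ii) argument to (i).
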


\begin{proof}
We first observe that $\breve{\mathcal{Q}}_{\mathbf{E}}$ is bounded
since $\breve{\mathcal{Q}}_{\mathbf{E}} \subset
\overline{ch(\mathbf{D})}$, where $ch(\mathbf{D})$ is the convex
hull of $\mathbf{D}$. By further noting that $\partial \mathbf{G}$
is bounded, we know that $\mathbf{G}\backslash
\overline{\breve{\mathcal{Q}}}_{\mathbf{E}}$ has exactly one
unbounded connected component. Hence, if $\mathbf{G}\backslash
\overline{\breve{\mathcal{Q}}}_{\mathbf{E}}$ is not connected, it
must have some bounded connected component, say
$\mathcal{C}_0\subset \mathbf{G}$. Clearly, there must be one face
of the polyhedral domain $\mathcal{C}_0$ that comes from exactly a
perfect plane in $\breve{\mathcal{Q}}_{\mathbf{E}}$, say
$\widetilde{\Pi}_0$.  Now, one can verify directly that
$\mathbf{\Lambda}_{\widetilde{\Pi}_0}\subset \mathcal{C}_0\cup
\mathscr{R}_{\Pi_0}\mathcal{C}_0$, which is bounded since
$\mathcal{C}_0$ is bounded. But this contradicts to the assumption
that $\widetilde{\Pi}_0\in \mathcal{Q}_{\mathbf{E}}$, thus proving
assertion (i). Next, assertion (ii) is readily seen from (i). In
fact, if $\mathbf{G}\backslash \overline{\mathcal{Q}}_{\mathbf{E}}$
has a bounded connected component, say $\mathcal{D}_0$, then one
must have $\mathcal{D}_0\subset \mathbf{G}\backslash
\overline{\breve{\mathcal{Q}}}_{\mathbf{E}}$, which is certainly not
true. The proof is completed.
\end{proof}

Correspondingly, we set
\begin{equation}
\mathcal{S}_\mathbf{E}=\{\widetilde{\Pi};\ \widetilde{\Pi} \
\mbox{is a bounded perfect plane with bounded
$\mathbf{\Lambda}_{\widetilde{\Pi}}$}\}.
\end{equation}

Finally, we give a lemma concerning the fundamental property of a
connected set (see e.g., Theorem~3.19.9 in \cite{Die}), which shall
be needed in the next section on proving Theorem~\ref{thm:single
far-field}.

\begin{Lemma}\label{lem:fundamental property}
Let $\mathbb{E}$ be a metric space, $\mathscr{A}\subset \mathbb{E}$
be a subset and $\mathscr{B}\subset \mathbb{E}$ be a connected set
such that $\mathscr{A}\cap \mathscr{B}\neq \emptyset$ and
$(\mathbb{E}\backslash \mathscr{A})\cap \mathscr{B}\neq \emptyset$,
then $\partial \mathscr{A}\cap \mathscr{B}\neq \emptyset$.
\end{Lemma}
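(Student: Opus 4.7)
The plan is to argue by contradiction using the standard decomposition of a metric (indeed, any topological) space with respect to a subset. Assume $\partial \mathscr{A}\cap \mathscr{B}=\emptyset$ and aim to express $\mathscr{B}$ as a disjoint union of two nonempty relatively open sets, which will contradict the assumed connectedness.

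Recall that any point of $\mathbb{E}$ lies in exactly one of the three pairwise disjoint sets $\mathrm{int}(\mathscr{A})$, $\partial \mathscr{A}$, $\mathrm{int}(\mathbb{E}\setminus \mathscr{A})$. First I would set
\[
\mathscr{U}=\mathrm{int}(\mathscr{A})\cap \mathscr{B},\qquad \mathscr{V}=\mathrm{int}(\mathbb{E}\setminus \mathscr{A})\cap \mathscr{B}.
\]
Both $\mathscr{U}$ and $\mathscr{V}$ are open in the subspace topology of $\mathscr{B}$, and they are disjoint. Under the hypothesis $\partial \mathscr{A}\cap \mathscr{B}=\emptyset$, the above trichotomy gives at once $\mathscr{B}=\mathscr{U}\cup \mathscr{V}$.

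Next I would verify that $\mathscr{U}$ and $\mathscr{V}$ are both nonempty. Pick any $x\in \mathscr{A}\cap \mathscr{B}$, which exists by hypothesis. Then $x\in \mathscr{B}$ and $x\notin \partial \mathscr{A}$, so $x$ must lie in $\mathrm{int}(\mathscr{A})$, placing $x\in \mathscr{U}$. The same reasoning applied to a point of $(\mathbb{E}\setminus \mathscr{A})\cap \mathscr{B}$ (nonempty by hypothesis) shows $\mathscr{V}\neq \emptyset$.

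Thus $\mathscr{B}$ is partitioned into two nonempty relatively open sets $\mathscr{U}$ and $\mathscr{V}$, contradicting the connectedness of $\mathscr{B}$. The only step that requires a moment of care is the observation that a point of $\mathscr{A}\cap \mathscr{B}$ avoiding $\partial \mathscr{A}$ automatically lies in $\mathrm{int}(\mathscr{A})$ (and symmetrically for the complement); everything else is routine point-set topology. Since the statement is presented here as an immediate citation from \cite{Die}, I would keep the write-up to a few lines, presenting the decomposition and the contradiction without further elaboration.
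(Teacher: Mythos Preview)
Your argument is correct and is the standard proof of this elementary connectedness fact. The paper does not actually supply a proof of this lemma; it merely cites Theorem~3.19.9 in \cite{Die}, so your write-up is exactly the sort of short verification one would expect in place of that citation.
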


%\begin{Lemma}\label{lem:intersection}
%Suppose that $\mathbf{\Lambda}\subset \mathbf{G}$ is a bounded
%polyhedral domain such that
%\[
%\nu\times \mathbf{E}=0\quad \mbox{on \ \ $\partial
%\mathbf{\Lambda}$}.
%\]
%Then every open face lying on $\partial \mathbf{\Lambda}\backslash
%\mathbf{D}$ cannot be connectedly extended to an unbounded planar
%domain in $\mathbf{G}$.
%\end{Lemma}
%
%\noindent \textbf{Proof.}~~Noting that $\mathbf{D}$ is composed of
%pairwise disjoint polyhedra, we know that $\partial
%\mathbf{\Lambda}\backslash \mathbf{D}\neq \emptyset$.

\section{Proof of Theorem~\ref{thm:single far-field}}\label{sect:main
proof}

The entire section is devoted to the proof of
Theorem~\ref{thm:single far-field} by contradiction. Assume that
$\mathbf{D}\neq \widetilde{\mathbf{D}}$ and
\begin{equation}\label{eq:thm 1}
\mathbf{E}_\infty({\hat{x}}; \mathbf{D}, p_0, k_0,
d_0)=\mathbf{E}_\infty({\hat{x}}; \mathbf{\widetilde{D}}, p_0, k_0,
d_0)\qquad \mbox{for\, $\hat{{x}}\in \mathbb{S}^2$}.
\end{equation}
Let $\Omega$ be the unbounded connected component of
$\mathbb{R}^3\backslash (\mathbf{D}\cup
\widetilde{\mathbf{D}})$.\footnote{Since both $\mathbf{D}$ and
$\widetilde{\mathbf{D}}$ are compact sets, we know that $\Omega$ is
unique. Moreover, it is obvious that $\partial \Omega$ forms the
boundary of a polyhedral domain in $\mathbf{G}$.} By Rellich's
theorem (see Theorem~6.9, \cite{ColKre}), we infer from (\ref{eq:thm
1}) that
\begin{equation}\label{eq:n1}
\mathbf{E}({x};\mathbf{D})=\mathbf{E}({x};\widetilde{\mathbf{D}})\qquad
\mbox{for\, ${x}\in\Omega$},
\end{equation}
where $\mathbf{E}({x};\mathbf{D})$ and
$\mathbf{E}({x};\widetilde{\mathbf{D}})$ are, respectively,
abbreviations $\mathbf{E}({x};\mathbf{D},p_0,k_0,d_0)$ and
$\mathbf{E}({x};\widetilde{\mathbf{D}},p_0$ $,k_0,d_0)$. Next,
noting that $\mathbf{D}\neq \widetilde{\mathbf{D}}$, we see that
either $(\mathbb{R}^3\backslash \bar{\Omega})\backslash
\mathbf{D}\neq \emptyset$ or $(\mathbb{R}^3\backslash
\bar{\Omega})\backslash \widetilde{\mathbf{D}}\neq \emptyset$.
Without loss of generality, we assume the former case and let
$D^*:=(\mathbb{R}^3\backslash \bar{\Omega})\backslash \mathbf{D}\neq
\emptyset$. It is easily seen that $D^*\subset
\widetilde{\mathbf{D}}$, so $D^*$ is bounded. Moreover, by choosing
connected component if necessary, we assume that $D^*$ is connected.
Clearly, $D^*$ is a bounded polyhedral domain in
$\mathbf{G}=\mathbb{R}^3\backslash \mathbf{D}$ and
$\mathbf{E}({x};\mathbf{D})$ is defined over ${D}^*$. Noting
$\partial D^*\subset \partial \Omega\cup
\partial \mathbf{D}\subset \partial \mathbf{D}\cup \partial \widetilde{\mathbf{D}}$ and using
(\ref{eq:n1}), we have from the perfect boundary conditions of
$\mathbf{E}({x};\mathbf{D})$ and
$\mathbf{E}({x};\widetilde{\mathbf{D}})$ on $\partial\mathbf{D}$ and
$\partial\widetilde{\mathbf{D}}$ that
\begin{equation}
\nu\times \mathbf{E}({x};\mathbf{D})=0\quad \mbox{on\ \ $\partial
D^*$}.
\end{equation}
In the following, in order to simply notations, we use as those
introduced in Section~2, e.g., we write $\mathbf{E}({x})$ to denote
$\mathbf{E}({x};\mathbf{D})$ etc.. The rest of the proof will be
proceeded into three steps and a brief outline is as follows. In the
first step, we will find a perfect plane $\widetilde{\Pi}_1\in
\mathcal{S}_\mathbf{E}$, and this is the starting point of the
subsequent path argument. In the second step, we would construct
implicitly an exit path, which is a regular curve lying entirely in
the exterior of $\mathbf{D}$ and connecting to infinity. As we
mentioned earlier that the path argument might be broken down with
the presence of some ``unbounded" perfect planes (namely, perfect
planes in $\mathcal{Q}_{\mathbf{E}}$), in order to avoid our
subsequent argument being trapped at such ``unbounded" perfect
planes, the curve is required to have at most one intersection with
$\mathcal{Q}_{\mathbf{E}}$. Fortunately, this can be done by using
Lemma~\ref{lem:topology}. Finally, using the reflection principle in
Theorem~\ref{thm:reflection}, we make continuation of (bounded)
perfect planes along the exit path to find a sequence of perfect
planes. Then a contradiction is constructed by showing that the
continuation must follow the exit path to infinity since we always
step a length larger than a fixed positive constant when making such
continuation, but on the other hand, all the bounded perfect planes
are contained in the convex hull of $\mathbf{D}$ being bounded. In
this final step, we must be carefully treating the possible presence
of ``unbounded" perfect planes and this is the main difference of
the present path argument from those implemented in
\cite{LiuYamZou1} and \cite{LiuYamZou2}.

\vskip 3mm

\noindent \textbf{Step~I: Existence of a bounded perfect plane
$\widetilde{\Pi}_1$ with bounded
$\mathbf{\Lambda}_{\widetilde{\Pi}_1}$}

\vskip 3mm

We first note that $\partial D^*\backslash
\partial \mathbf{D}\neq \emptyset$. Hence, there must be an
open face say $\Sigma_0$ on $\partial D^*$ that can be extended in
$\mathbf{G}$ to form a perfect plane and it is denoted by
$\widetilde{\Pi}_0$. Since $\widetilde{\Pi}_0$ is extended from a
face of the bounded polyhedral domain $D^*$ in $\mathbf{G}$, we
infer from the following Lemma~\ref{lem:intersection} that
$\widetilde{\Pi}_0$ is bounded. Now, if the symmetric set
$\mathbf{\Lambda}_{\widetilde{\Pi}_0}$ corresponding to
$\widetilde{\Pi}_0$ is bounded, then we are done since we can take
$\widetilde{\Pi}_0$ as $\widetilde{\Pi}_1$. So, without loss of
generality, we assume that $\mathbf{\Lambda}_{\widetilde{\Pi}_0}$ is
unbounded. Next, based on $\Sigma_0$, we construct a bounded
polyhedral domain in $\mathbf{G}$ which is symmetric with respect to
$\Pi_0$ but different from $\mathbf{\Lambda}_{\widetilde{\Pi}_0}$.
The construction procedure is similar to that for
$\mathbf{\Lambda}_{\widetilde{\Pi}_0}$, and we nonetheless present
it here for clearness.

Fix an arbitrary point $x^*\in \Sigma_0$ and let
$\mathrm{B}^*:=B_\varepsilon(x^*)$ with $\varepsilon>0$ sufficiently
small such that $\mathrm{B}^*$ is divided by $\Sigma_0$ into two
(open) half balls $\mathrm{B}_*^+$ and $\mathrm{B}_*^-$ satisfying
$\mathrm{B}_*^+\subset D^*$ and $\mathrm{B}_*^-\subset
\mathbf{G}\backslash D^*$. Next, let $\Theta_*^+$ be the connected
component of $\mathscr{R}_{\Pi_0}(\mathbf{G}\backslash
\overline{D^*})\cap D^*$ containing $\mathrm{B}_*^+$ and
$\Theta_*^-$ be the connected component of
$\mathscr{R}_{\Pi_0}D^*\cap (\mathbf{G}\backslash \overline{D^*})$
containing $\mathrm{B}_*^-$. Set $\Theta^*=\Theta_*^+\cup
\Sigma_0\cup \Theta_*^-$. Clearly, $\Theta^*$ is a non-empty bounded
polyhedral domain in $\mathbf{G}$ since $\mathrm{B}^*\subset
\Theta^*\subset D^*\cup \mathscr{R}_{\Pi_0}D^*$. We remark that
$\Theta_0$ is in fact the connected component of $(D^*\cup
\mathscr{R}_{\Pi_0}D^*)\cap \mathbf{\Lambda}_{\widetilde{\Pi}_0}$
containing $\Sigma_0$. By the reflection principle (ii) of
Theorem~\ref{thm:reflection}, $\nu\times \mathbf{E}(x)=0$ on
$\partial \Theta^*$. It is obvious that $\partial \Theta^*\backslash
\mathbf{D}\neq \emptyset$. Let $\Sigma_1\subset \partial
\Theta^*\backslash \mathbf{D}$ be an open face. By analytic
continuation, $\Sigma_1$ is extended in $\mathbf{G}$ to give a
perfect plane $\widetilde{\Pi}_1$. Since $\Theta^*$ is symmetric
with respect to $\Pi_0$, we know
$\Sigma_1\subset\hspace*{-3.6mm}\backslash \ \Pi_0$ and therefore
$\widetilde{\Pi}_1\in\hspace*{-2.6mm}\backslash\,
\mathcal{Q}_{\mathbf{E}}$ by Lemma~\ref{lem:b conplane ub}, i.e.,
$\widetilde{\Pi}_1$ is bounded with bounded
$\mathbf{\Lambda}_{\widetilde{\Pi}_1}$.

\vskip 3mm

\noindent \textbf{Step~II: Construction of the exit path $\gamma$}

\vskip 3mm

Since both $\partial \mathbf{G}$ and $\widetilde{\Pi}_1$ are
bounded, we see that $\mathbf{G}\backslash \widetilde{\Pi}_1$ has a
unique unbounded connected component, which is denoted by
$\mathscr{U}$. It readily has that $\widetilde{\Pi}_1\subset
\partial \mathscr{U}$ and $\mathscr{U}$ contains the exterior of a sufficiently large
ball containing $\mathbf{D}$. Next, we fix an arbitrarily point
$x_1\in \widetilde{\Pi}_1$. Let $\gamma:=\gamma(t) (t\geq 0)$ be a
regular curve such that $\gamma(t_1)=x_1$ with $t_1=0$ and
$\gamma(t) (t>0)$ lies entirely in $\mathscr{U}$. Furthermore,
$\gamma$ connects to infinity, i.e.,
$\lim_{t\rightarrow\infty}|\gamma(t)|=\infty$. The exit path
$\gamma$ constructed in this way might have non-empty intersection
with $\mathcal{Q}_\mathbf{E}$. In this case, we require that
$\gamma(t) (t>0)$ has only one intersection point with
$\mathcal{Q}_\mathbf{E}$. In fact, in case $\gamma(t)\cap
\mathcal{Q}_\mathbf{E}\neq \emptyset$, we would modify the curve
$\gamma$ as follows to satisfy such requirement. Let
$x_T:=\gamma(T)$ be the ``first" intersection point of $\gamma(t)
(t>0)$ and $\mathcal{Q}_\mathbf{E}$; that is,
\[
T=\min\{t>0;\ \gamma(t)\in \mathcal{Q}_\mathbf{E}\}<\infty.
\]
Then, set $\mathscr{V}$ be the connected component of
$\mathbf{G}\backslash \mathcal{Q}_\mathbf{E}$ such that $x_T\in
\partial \mathscr{V}$. Let
$\mathscr{W}:=\mathscr{U}\cap\mathscr{V}$. It can be verified that
$\mathscr{W}$ is an unbounded connected open set such that
$x_T\in\partial\mathscr{W}$. Indeed, the connectedness of
$\mathscr{W}$ is obvious by noting that both $\mathscr{U}$ and
$\mathscr{V}$ are connected. Whereas the unboundedness of
$\mathscr{W}$ is due to the facts that $\mathscr{V}$ is unbounded by
Lemma~\ref{lem:topology} and $\mathscr{U}$ contains the exterior of
a sufficiently large ball containing $\mathbf{D}$ as mentioned
earlier. Next, let $\eta(t)(t\geq T)$ be a regular curve such that
$\eta(T)=x_T$, $\eta(t)(t> T)$ lies entirely in $\mathscr{W}$ and
connects to infinity (i.e.,
$\lim_{t\rightarrow\infty}|\eta(t)|=\infty$). Furthermore, it is
trivially required that $\eta(t)$ has $C^1$-connection with
$\gamma(t)(0\leq t\leq T)$ at $x_T$. Now, set
\[
\tilde{\gamma}(t)=
\begin{cases}
& \gamma(t)\qquad 0\leq t\leq T,\\
& \eta(t)\qquad t>T,
\end{cases}
\]
then $\tilde{\gamma}(t) (t\geq 0)$ satisfies all our requirements of
an exit path.

\vskip 3mm

\noindent \textbf{Step~III: Continuation of bounded perfect planes
along $\gamma$}

\vskip 3mm

Let $d_0=\mathbf{d}(\gamma, \mathbf{D})>0$, which is attainable
since $\mathbf{D}$ is compact, and $r_0=d_0/2$. Clearly,
$\bar{B}_{r_0}(\gamma(t))\subset \mathbf{G}$ for any $t\geq 0$. Let
${\widetilde{x}}_2^+=\gamma(\tilde{t}_2)\in S_{r_0}(x_1)\cap
\gamma$, where $\tilde{t}_2$ is taken to be $\tilde{t}_2=\max \{t>0;
\gamma(t)\in S_{r_0}({x}_1)\}$, and let ${\widetilde{x}}_2^-$ be the
symmetric point of ${{\widetilde{x}}}_2^+$ with respect to $\Pi_1$.
Next, let $\mathbf{G}_1^+$ be the connected component of
$\mathbf{G}\backslash \widetilde{\Pi}_1$ containing
${\widetilde{x}}_2^+$, and $\mathbf{G}_1^-$ be the connected
component of $\mathbf{G}\backslash \widetilde{\Pi}_1$ containing
${\widetilde{x}}_2^-$. Then let $\mathbf{\Lambda}_1^+$ be the
connected component of $\mathbf{G}_1^+\cap
R_{\Pi_1}(\mathbf{G}_1^-)$ containing ${\widetilde{x}}_2^+$ and
$\mathbf{\Lambda}_1^-$ be the connected component of
$\mathbf{G}_1^-\cap R_{\Pi_1}(\mathbf{G}_1^+)$ containing
${\widetilde{x}}_2^-$. Set
$\mathbf{\Lambda}_1=\mathbf{\Lambda}_1^+\cup \widetilde{\Pi}_1\cup
\mathbf{\Lambda}_1^-$. In fact, $\mathbf{\Lambda}_1$ is the
symmetric set $\mathbf{\Lambda}_{\widetilde{\Pi}_1}$ corresponding
to the perfect plane $\widetilde{\Pi}_1$ and we present its
construction again for convenience of the subsequent argument. Since
$\widetilde{\Pi}_1\in \mathcal{S}_\mathbf{E}$, $\mathbf{\Lambda}_1$
is bounded. By Lemma~\ref{lem:fundamental property}, it is easy to
deduce that $\gamma\cap \partial \mathbf{\Lambda}_1 \neq \emptyset$.
We let ${x}_2=\gamma(t_2)$ be the `last' intersection point of
$\gamma$ and $\partial \mathbf{\Lambda}_1$; namely, $t_2=\max\{t>0;
\gamma(t)\in \partial \mathbf{\Lambda}_1\}<\infty$. This then
implies the existence of a perfect plane passing through $x_2$ which
is extended from an open face of $\partial \mathbf{\Lambda}_1$ whose
closure contains $x_2$. We denote the perfect plane by
$\widetilde{\Pi}_2$. Without loss of generality, we further assume
that $x_2$ is the `last' intersection point of $\gamma$ with
$\widetilde{\Pi}_2$. By the following result, we know
$\widetilde{\Pi}_2$ is bounded. We shall prove at the end of this
section:

\begin{Lemma}\label{lem:intersection}
Suppose that $\mathbf{\Lambda}\subset \mathbf{G}$ is a bounded
polyhedral domain such that
\[
\nu\times \mathbf{E}=0\quad \mbox{on \ \ $\partial
\mathbf{\Lambda}$}.
\]
Then every open face lying on $\partial \mathbf{\Lambda}\backslash
\mathbf{D}$ cannot be connectedly extended to an unbounded planar
domain in $\mathbf{G}$.
\end{Lemma}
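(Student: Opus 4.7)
I would argue by contradiction. Suppose some open face $\Sigma\subset\partial\mathbf{\Lambda}\setminus\mathbf{D}$, lying on a plane $\Pi$, extends connectedly in $\mathbf{G}$ to an unbounded perfect plane $\widetilde{\Pi}$. Since $\widetilde{\Pi}$ is then an unbounded perfect plane, hence a member of $\mathcal{Q}_{\mathbf{E}}$, Lemma~\ref{lem:conplane} forces $\widetilde{\Pi}$ to lie on the unique plane $\Pi^{*}$ shared by all members of $\mathcal{Q}_{\mathbf{E}}$; so $\Pi=\Pi^{*}$, and $\nu^{*}\parallel(d_{0}\times p_{0})\times d_{0}$.

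The strategy is then to reflect $\mathbf{\Lambda}$ across $\Pi^{*}$ and apply the reflection principle (ii) of Theorem~\ref{thm:reflection} to produce a new perfect face on a plane transverse to $\Pi^{*}$. Concretely, I would pick any face $\Sigma'$ of $\mathbf{\Lambda}$ not lying on $\Pi^{*}$---such a face exists because $\mathbf{\Lambda}$ is three-dimensional---on a plane $\Pi'\neq\Pi^{*}$, and, treating $\Sigma$ as the perfect face $\widetilde{\Pi}$ in the statement of Theorem~\ref{thm:reflection}(ii), conclude that $\mathscr{R}_{\Pi^{*}}\Sigma'$ is a new perfect face on $\mathscr{R}_{\Pi^{*}}\Pi'\neq\Pi^{*}$. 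Its connected planar extension inside $\mathbf{G}$ is a perfect plane; by Lemma~\ref{lem:conplane} applied once more, any such extension sitting on a plane different from $\Pi^{*}$ must be bounded. I would then iterate the construction on the enlarged symmetric bounded polyhedral domain $\mathbf{\Lambda}\cup\Sigma\cup\mathscr{R}_{\Pi^{*}}\mathbf{\Lambda}$ (which again carries $\nu\times\mathbf{E}=0$ on its whole boundary, by the reflection principle), producing at each stage further perfect planes transverse to $\Pi^{*}$ and confined inside $\overline{ch(\mathbf{D})}$. Since $\mathbf{D}$ is composed of only finitely many compact polyhedra and the reflections of its faces through $\Pi^{*}$ yield only finitely many planes, the cascade cannot continue indefinitely; at some stage the iteration must either break Lemma~\ref{lem:nparallel} by producing an unbounded transverse perfect plane, or yield infinitely many distinct perfect planes, both of which are impossible. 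This contradicts the initial assumption.

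The principal obstacle is verifying the hypothesis $\mathbf{\Lambda}\cup\mathscr{R}_{\Pi^{*}}\mathbf{\Lambda}\subset\mathbf{G}$ that Theorem~\ref{thm:reflection}(ii) demands, i.e.\ that $\mathscr{R}_{\Pi^{*}}\mathbf{\Lambda}$ avoids $\mathbf{D}$. This is not automatic, since $\mathbf{D}$ may have components on both sides of $\Pi^{*}$. The natural tool is the symmetric set $\mathbf{\Lambda}_{\widetilde{\Pi}}$ of Section~\ref{sect:key lemmas}, constructed from an interior point $x_{0}\in\Sigma\subset\widetilde{\Pi}$: both $\mathbf{\Lambda}$ and $\mathbf{\Lambda}_{\widetilde{\Pi}}$ contain the common half-ball at $x_{0}$ on the $\mathbf{\Lambda}$ side of $\Pi^{*}$, and the connectedness of $\mathbf{\Lambda}$ inside $\mathbf{G}\setminus\widetilde{\Pi}$, together with the maximal-connected-component definition of $\mathbf{\Lambda}_{\widetilde{\Pi}}^{+}$, would give $\mathbf{\Lambda}\subset\mathbf{\Lambda}_{\widetilde{\Pi}}^{+}$; the symmetry of $\mathbf{\Lambda}_{\widetilde{\Pi}}$ across $\Pi^{*}$ would then yield $\mathscr{R}_{\Pi^{*}}\mathbf{\Lambda}\subset\mathbf{\Lambda}_{\widetilde{\Pi}}^{-}\subset\mathbf{G}$, exactly as required. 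Making the containment $\mathbf{\Lambda}\subset\mathbf{\Lambda}_{\widetilde{\Pi}}^{+}$ rigorous---in particular, handling the possibility that $\mathbf{\Lambda}$ winds around the edge of $\widetilde{\Pi}$ to sit partly on both sides of $\Pi^{*}$---is the most delicate step and where I expect most of the technical work to lie, likely requiring both the unboundedness of $\widetilde{\Pi}$ and the structure of $\mathbf{\Lambda}_{\widetilde{\Pi}}$ as a symmetric polyhedral set with perfect-conducting boundary.
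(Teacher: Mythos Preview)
Your opening move---assume a face $\Sigma\subset\partial\mathbf{\Lambda}$ extends to an unbounded perfect plane $\widetilde{\Pi}\subset\Pi^{*}$, then use Lemma~\ref{lem:conplane} to pin down $\Pi^{*}$---is correct and matches the paper. The gap is in what follows. After the single reflection $\mathbf{\Lambda}\mapsto\mathbf{\Lambda}':=\mathbf{\Lambda}\cup\Sigma\cup\mathscr{R}_{\Pi^{*}}\mathbf{\Lambda}$, your ``iteration'' is not well defined: $\mathbf{\Lambda}'$ is already symmetric across $\Pi^{*}$, so a further reflection across $\Pi^{*}$ produces nothing new. If instead you intend to reflect next across one of the newly found \emph{transverse} perfect planes, then (a) the very obstacle you flag---showing the reflected domain stays inside $\mathbf{G}$---recurs at every stage with no mechanism to resolve it, and (b) your finiteness claim collapses, since compositions of reflections across non-parallel planes generate infinitely many distinct planes in general, not ``only finitely many''. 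Either way the dichotomy you state (``break Lemma~\ref{lem:nparallel} or yield infinitely many distinct perfect planes'') is not forced: the process could simply stabilise after finitely many bounded transverse perfect planes, which is no contradiction at all. Your proposed fix for the principal obstacle also has a hole: the inclusion $\mathbf{\Lambda}\subset\mathbf{\Lambda}_{\widetilde{\Pi}}^{+}$ relies on $\mathbf{\Lambda}\setminus\widetilde{\Pi}$ being connected, but nothing prevents the unbounded extension $\widetilde{\Pi}$ from re-entering the interior of $\mathbf{\Lambda}$ and slicing it into several pieces.

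The paper avoids all of this by never reflecting $\mathbf{\Lambda}$ itself. Instead it notices that, because $\mathbf{\Lambda}$ is bounded while $\widetilde{\Pi}_{0}$ is not, $\widetilde{\Pi}_{0}$ must leave $\partial\mathbf{\Lambda}$ along some edge, so an \emph{adjacent} face $\Gamma_{1}$ of $\partial\mathbf{\Lambda}$ lies on a plane transverse to $\Pi^{*}$ and hence extends to a perfect plane $\widetilde{\Pi}_{1}\in\mathcal{S}_{\mathbf{E}}$. One then runs the Step~III path argument along a regular curve $\gamma$ that lies \emph{inside $\widetilde{\Pi}_{0}$} and goes to infinity. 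The point of putting $\gamma\subset\widetilde{\Pi}_{0}$ is that every new perfect plane $\widetilde{\Pi}_{n}$ meeting $\gamma$ is automatically transverse to $\Pi^{*}$, hence in $\mathcal{S}_{\mathbf{E}}$ by Lemma~\ref{lem:b conplane ub}; the associated symmetric sets $\mathbf{\Lambda}_{\widetilde{\Pi}_{n}}$ are therefore bounded and the standard step-size estimate $|\gamma(t_{n}\le t\le t_{n+1})|\ge\tau_{0}$ applies, giving the contradiction exactly as in~\eqref{eq:contradiction}. This sidesteps both the containment issue (one uses the intrinsically defined $\mathbf{\Lambda}_{\widetilde{\Pi}_{n}}\subset\mathbf{G}$ rather than reflecting an arbitrary $\mathbf{\Lambda}$) and the termination issue (the uniform step along an unbounded curve forces infinitely many planes in $\overline{ch(\mathbf{D})}$).
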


Now, we still need to distinguish between two cases of
$\widetilde{\Pi}_2\in \breve{\mathcal{Q}}_\mathbf{E}$ and
$\widetilde{\Pi}_2\in \mathcal{S}_\mathbf{E}$. But for the end of a
more general discussion, we next give the induction procedure for
the above reflection argument of finding a different perfect plane
with a known one. Suppose that $\widetilde{\Pi}_n\in
\mathcal{S}_\mathbf{E}$, $n\in \mathbb{N}$ and $x_n:=\gamma(t_n)\in
\gamma\cap \widetilde{\Pi}_n$ is the `last' intersection point
between $\gamma$ and $\widetilde{\Pi}_n$.

Let ${\widetilde{x}}_{n+1}^+=\gamma(\tilde{t}_{n+1})\in
S_{r_0}(x_n)\cap \gamma$, where $\tilde{t}_{n+1}$ is taken to be
$\tilde{t}_{n+1}=\max \{t>0; \gamma(t)\in S_{r_0}({x}_n)\}$, and let
${\widetilde{x}}_{n+1}^-$ be the symmetric point of
${{\widetilde{x}}}_{n+1}^+$ with respect to $\Pi_1$. Next, let
$\mathbf{G}_n^+$ be the connected component of $\mathbf{G}\backslash
\widetilde{\Pi}_n$ containing ${\widetilde{x}}_{n+1}^+$, and
$\mathbf{G}_n^-$ be the connected component of $\mathbf{G}\backslash
\widetilde{\Pi}_n$ containing ${\widetilde{x}}_{n+1}^-$. Then let
$\mathbf{\Lambda}_n^+$ be the connected component of
$\mathbf{G}_n^+\cap R_{\Pi_n}(\mathbf{G}_n^-)$ containing
${\widetilde{x}}_{n+1}^+$ and $\mathbf{\Lambda}_n^-$ be the
connected component of $\mathbf{G}_n^-\cap
R_{\Pi_n}(\mathbf{G}_n^+)$ containing ${\widetilde{x}}_{n+1}^-$. Set
$\mathbf{\Lambda}_n=\mathbf{\Lambda}_n^+\cup \widetilde{\Pi}_n\cup
\mathbf{\Lambda}_n^-$. By our earlier discussion,
$\mathbf{\Lambda}_n=\mathbf{\Lambda}_{\widetilde{\Pi}_n}$, and it is
bounded since $\widetilde{\Pi}_n\in \mathcal{S}_\mathbf{E}$. By
Lemma~\ref{lem:fundamental property}, $\gamma\cap \partial
\mathbf{\Lambda}_n\neq \emptyset$. We let $x_{n+1}:=\gamma(t_{n+1})$
with $t_{n+1}=\max\{t>0; \gamma(t)\in \partial
\mathbf{\Lambda}_n\}<\infty$. This again implies the existence of a
perfect plane $\widetilde{\Pi}_{n+1}$ passing through $x_{n+1}$, and
$\widetilde{\Pi}_{n+1}$ is bounded by Lemma~\ref{lem:intersection}.
Furthermore, we can also assume that $x_{n+1}$ is the `last'
intersection point of $\gamma$ with $\widetilde{\Pi}_{n+1}$. In the
following, we list several important results that have been
achieved:
\begin{enumerate}
\item[]\vspace*{-5mm}
\begin{equation}\label{eq:bounded pf}
\hspace*{-4.2cm}\mbox{(i)}~~x_n,x_{n+1}\in\widehat{\mathscr{P}}_\mathbf{E}:=\{x\in\mathscr{P}_\mathbf{E};\
x\in \widetilde{\Pi}\ \mbox{with $\widetilde{\Pi}\in
\mathcal{S}_\mathbf{E}\cup \breve{\mathcal{Q}}_\mathbf{E}$} \};
\end{equation}

\item[(ii)]$\widetilde{\Pi}_{n+1}$ is different from
$\widetilde{\Pi}_n$, since $t_{n}$ and $t_{n+1}$ with $t_{n+1}>t_n$
are respectively the `last' intersection points between $\gamma$ and
$\widetilde{\Pi}_n$ and $\widetilde{\Pi}_{n+1}$;

\item[(iii)] Both $\widetilde{\Pi}_n$ and $\widetilde{\Pi}_{n+1}$
are bounded;

\item[(iv)] Since $B_{r_0}(x_n)\subset \mathbf{\Lambda}_n$, the
length of $\gamma(t)$ from $t_n$ to $t_{n+1}$ is not less than
$r_0$, i.e.,
\begin{equation}
|\gamma(t_n\leq t\leq t_{n+1})|\geq |\gamma(t_n\leq t\leq
\tilde{t}_{n+1})|\geq r_0.
\end{equation}
\end{enumerate}

%\begin{equation}
%\begin{split}
%(i)&~~x_n,x_{n+1}\in\widehat{\mathscr{P}}_\mathbf{E}:=\{x\in\mathscr{P}_\mathbf{E};\
%x\in \widetilde{\Pi}\ \mbox{with $\widetilde{\Pi}$ a bounded perfect
%plane} \};\\
%& \vspace*{-4mm}$\empty$ \\
%(ii)&~~\mbox{$\widetilde{\Pi}_{n+1}$ is different from
%$\widetilde{\Pi}_n$, since $t_{n}$ and $t_{n+1}$ with $t_{n+1}>t_n$
%are respec}\\
%&~~\mbox{-tively the 'last' intersection points between $\gamma$ and
%$\widetilde{\Pi}_n$ and $\widetilde{\Pi}_{n+1}$ };
%\end{split}
%\end{equation}

If $\widetilde{\Pi}_{n+1}\in \mathcal{S}_\mathbf{E}$, by repeating
the above reflection argument, we can find another bounded perfect
plane $\widetilde{\Pi}_{n+2}$, and also $x_{n+2}:=\gamma(t_{n+2})$,
the `last' intersection point between $\gamma$ and
$\widetilde{\Pi}_{n+2}$, such that
\[
|\gamma(t_{n+1}\leq t\leq t_{n+2})|\geq r_0.
\]

In case $\widetilde{\Pi}_{n+1}\in \breve{\mathcal{Q}}_{\mathbf{E}}$,
we can no longer guarantee that $\gamma\cap \partial
\mathbf{\Lambda}_{n+1}\neq \emptyset$ since
$\mathbf{\Lambda}_{n+1}=\mathbf{\Lambda}_{\widetilde{\Pi}_{n+1}}$ is
unbounded. Let $\mathrm{B}_0:=B_{\epsilon_0}(x_{n+1})$ with
$\epsilon_0>0$ sufficiently small such that $\mathrm{B}_0\subset
B_{r_0}(x_{n+1})$ and one of the half ball of $B_0$ divided by
$\widetilde{\Pi}_{n+1}$ is contained entirely in
$\mathbf{\Lambda}_{n}$.\footnote{Here, we recall that
$\widetilde{\Pi}_{n+1}$ is extended from an open face of
$\mathbf{\Lambda}_{n}$.} Then, let $\mathbf{\Lambda}_{n+1}^*$ be the
connected component of $(\mathbf{\Lambda}_n\cup
\mathscr{R}_{\Pi_{n+1}}\mathbf{\Lambda}_n)\cap
\mathbf{\Lambda}_{n+1}$ containing $\mathrm{B}_0$.\footnote{This is
similar to the construction of $\Theta^*$ from $D^*$ in Step I of
the present proof.} Since $\mathbf{\Lambda}_n$ is bounded, we know
$\mathbf{\Lambda}_{n+1}^*$ is bounded. Moreover, by the reflection
principle in (ii) of Theorem~\ref{thm:reflection}, $\nu\times
\mathbf{E}(x)=0$ on $\partial \mathbf{\Lambda}_{n+1}^*$. Now, by
Lemma~\ref{lem:fundamental property}, it is verified directly that
$\gamma\cap \mathbf{\Lambda}_{n+1}^*\neq \emptyset$. Also, we let
$x_{n+2}:=\gamma(t_{n+2})$ be the `last' intersection point between
$\gamma$ and $\partial\mathbf{\Lambda}_{n+2}^*$. By analytic
continuation, this implies the existence of a perfect plane
$\widetilde{\Pi}_{n+2}$ passing through $x_{n+2}$, which must be
bounded by Lemma~\ref{lem:intersection}. More importantly, noting
that $\widetilde{\Pi}_{n+2}$ is not conplane to
$\widetilde{\Pi}_{n+1}$, we know by Lemma~\ref{lem:b conplane ub}
that $\widetilde{\Pi}_{n+2}\in \mathcal{S}_\mathbf{E}$. As what has
been frequently done before, we can further assume that $x_{n+2}$ is
the `last' intersection point of $\gamma$ with
$\widetilde{\Pi}_{n+2}$. Finally, it is easy to show
\[
|\gamma(t_{n+1}\leq t\leq t_{n+2})|\geq \epsilon_0.
\]

By induction and also by noting that $\gamma(t) (t>0)$ has at most
one intersection point with $\mathcal{Q}_\mathbf{E}$, we have
constructed a sequence of different perfect planes
$\widetilde{\Pi}_n,\ n=1,2,3,\ldots$, all belonging to
$\mathcal{S}_\mathbf{E}$ except possibly only one belonging to
$\breve{\mathcal{Q}}_\mathbf{E}$. Moreover, there is a strictly
increasing sequence $\{t_n\}_{n=1}^\infty$ together with a sequence
of points $x_n=\gamma(t_n)\in \gamma\cap \widetilde{\Pi}_n,\
n=1,2,3,\ldots$, such that
\begin{equation}\label{eq:inequality}
|\gamma(t_n\leq t\leq t_{n+1})|\geq r_0\qquad \mbox{when\ $n>n_0$},
\end{equation}
where $n_0$ is the index such that $\widetilde{\Pi}_{n_0}\in
\breve{\mathcal{Q}}_\mathbf{E}$ and it might be 0.

Now, we can conclude our proof of Theorem~\ref{thm:single far-field}
by a contradiction as follows. Since $\gamma(t_n)\in
\widehat{\mathscr{P}}_\mathbf{E}\subset \overline{ch (\mathbf{D})}$
being bounded and $\lim_{t\rightarrow\infty}|\gamma(t)|=\infty$, we
know there must exist some $T_0<\infty$ such that
$\lim_{n\rightarrow\infty}t_n=T_0$. Then,
\begin{equation}\label{eq:contradiction}
\lim_{n\rightarrow\infty}|\gamma(t_n\leq t\leq
t_{n+1})|=\lim_{n\rightarrow\infty}\int_{t_n}^{t_{n+1}}
|\gamma'(t)|\ dt=0.
\end{equation}
A contradiction to (\ref{eq:inequality}). \hfill $\Box$

\begin{proof}[Proof of Lemma~\ref{lem:intersection}]
Assume contrarily that there is an open face $\Gamma_0$ on
$\partial\mathbf{\Lambda}\backslash \mathbf{D}$ which can be
connectedly extended in $\mathbf{G}$ to give an unbounded planar
domain. By analytic continuation, this gives an unbounded perfect
plane $\widetilde{\Pi}_0$. Since $\mathbf{\Lambda}$ is a bounded
polyhedron in $\mathbf{G}$, $\widetilde{\Pi}_0$ must be separated
from $\mathbf{\Lambda}$ at some of its edge. Hence, there is another
open face $\Gamma_1$ on $\partial\mathbf{\Lambda}\backslash
\mathbf{D}$, such that $\Gamma_0$ and $\Gamma_1$ have a common edge
in $\mathbf{G}$. Again by analytic continuation, we have a perfect
plane $\widetilde{\Pi}_1$ from the connected extension of
$\Gamma_1$ in $\mathbf{G}$. Noting $\widetilde{\Pi}_0\in
\mathcal{Q}_\mathbf{E}$, we see $\widetilde{\Pi}_1\in
\mathcal{S}_\mathbf{E}$ by Lemma~\ref{lem:b conplane ub}. Next, the
argument follows a similar manner as that of Step III in the proof
of Theorem~\ref{thm:single far-field}.

Fix an arbitrary point $x_1\in \Gamma_0\cap\Gamma_1$. Let
$\gamma:=\gamma(t) (t\geq 0)$ be a regular curve such that
$\gamma(t_1)={x}_1$ with $t_1=0$ and $\gamma(t)(t>0)$ lies entirely
in the unbounded connected component of $\widetilde{\Pi}_0\backslash
\mathbf{\Lambda}$ and $\lim_{t\rightarrow\infty}|\gamma(t)|=\infty$.
Set $\tau_0=\mathbf{d}(\gamma, \mathbf{D})>0$. From our earlier
discussion in Step III of the proof of Theorem~\ref{thm:single
far-field}, we know $\gamma\cap \partial
\mathbf{\Lambda}_{\widetilde{\Pi}_1}\neq \emptyset$. Furthermore,
letting $x_2:=\gamma(t_2)$ be the `last' intersection point of
$\gamma$ with $\partial \mathbf{\Lambda}_{\widetilde{\Pi}_1}$, there
is another perfect plane $\widetilde{\Pi}_2$ extended from an open
face on $\partial \mathbf{\Lambda}_{\widetilde{\Pi}_1}$ such that
$\widetilde{\Pi}_2$ passes through $x_2$ and
\[
|\gamma(t_1\leq t\leq t_2)|\geq \tau_0.
\]
A crucial observation is that $\gamma\subset \widetilde{\Pi}_0$, we
can without loss of generality assume that $\widetilde{\Pi}_2$ is
non-parallel to $\widetilde{\Pi}_0$, therefore $\widetilde{\Pi}_2\in
\mathcal{S}_\mathbf{E}$ by Lemma~\ref{lem:b conplane ub}. By
repeating the above procedure, we can construct countably many
different perfect planes $\widetilde{\Pi}_n\in
\mathcal{S}_\mathbf{E},\ n=1,2,3,\ldots,$, together with a sequence
of points $x_n:=\gamma(t_n)\in \gamma\cap\widetilde{\Pi}_n$
satisfying
\[
|\gamma(t_n\leq t\leq t_{n+1})|\geq \tau_0.
\]
Finally, a similar contradiction is established as that in
(\ref{eq:contradiction}), thus completing the proof.
\end{proof}

\section{Concluding Remarks}

In this paper, we have established a global uniqueness for the
formally determined inverse electromagnetic obstacle scattering.
That is, the far-field pattern $\mathbf{E}_\infty({\hat{x}};
\mathbf{D}, p_0,$ $ k_0, d_0)$ for fixed $p_0\in \mathbb{R}^3$,
$k_0>0$, $d_0\in\mathbb{S}^2$ and all ${\hat{x}}\in \mathbb{S}^2$,
uniquely determine a general polyhedral scatterer $\mathbf{D}$. As
mentioned in the introduction, some uniqueness results on the unique
determination of general polyhedral obstacles have been established,
but all with the far-field patterns corresponding to two different
incident waves.

In \cite{LiuYamZou1}, the underlying obstacle admits the
simultaneous presence of finitely many cracks, where a \emph{crack}
is defined to be the closure of some bounded open subset of a plane
in $\mathbb{R}^3$. That is, in addition to finitely many solid
polyhedra, the polyhedral obstacle $\mathbf{D}$ in \cite{LiuYamZou1}
may also contains finitely many cracks. In the case with the
additional presence of a crack to the polyhedral obstacle
$\mathbf{D}$ considered in Theorem~\ref{thm:single far-field}, one
verifies straightforwardly that the argument in Step~I of its proof
might not hold any longer. In fact, one may not be able to find a
bounded polyhedral domain $D^*$ in $\mathbf{G}$, and it might be a
sole crack instead. In turn, one may not be able to construct the
bounded polyhedral domain $\Theta^*$, which is essential to find the
starting perfect plane $\widetilde{\Pi}_1\in \mathcal{S}_\mathbf{E}$
for the subsequent path argument.

Since knowing $\mathbf{E}_\infty({\hat{x}}; \mathbf{D})$ and
$\mathbf{H}_\infty({\hat{x}};\mathbf{D})$ are equivalent, one can
see that Theorem~\ref{thm:single far-field} is still valid with the
polyhedral obstacle $\mathbf{D}$ associated with the following
perfect boundary condition corresponding to $\mathbf{H}$
\begin{equation}\label{eq:perfect h}
\nu\times \mathbf{H}=0\quad \mbox{on\ \ $\partial \mathbf{G}$}.
\end{equation}
In \cite{LiuYamZou2}, a more general situation is considered that we
need not to know the \emph{a priori} physical properties of the
underlying obstacle. That is, the underlying obstacle $\mathbf{D}$
may be either associated with boundary condition (\ref{eq:perfect
bc}), or (\ref{eq:perfect h}), or even with mixed type of
(\ref{eq:perfect bc}) and (\ref{eq:perfect h}). In such setting, we
need to consider perfect planes corresponding to both the electric
field $\mathbf{E}$ and magnetic $\mathbf{H}$ (see
\cite{LiuYamZou2}). By using a single incident wave, one can show
that Lemma~\ref{lem:conplane} may not hold any longer. In fact, by
direct calculations, two non-parallel unbounded perfect planes, one
corresponding to $\mathbf{E}$ and the other corresponding to
$\mathbf{H}$, may not give a contradiction as that in
Lemma~\ref{lem:nparallel}. Consequently, Theorem~\ref{thm:single
far-field} might not be valid with the underlying polyhedral
obstacle $\mathbf{D}$ associated with mixed boundary conditions.

\section*{Acknowledgement}

The author would like to acknowledge the useful discussion with
Prof. Jun Zou of the Chinese University of Hong Kong and Prof.
Elschner Johannes of the Weierstrass Institute for Applied Analysis
and Stochastics, which is of great help to the current study.

\end{document}